\definecolor{myred}{HTML}{c20014}
\definecolor{mygreen}{HTML}{008000}
\crefname{equation}{}{}
\Crefname{equation}{Equation}{}
\newtheorem{theorem}{Theorem}[section]
\newtheorem{lemma}[theorem]{Lemma}
\theoremstyle{remark}
\newtheorem{rem}[theorem]{Remark}
\newtheorem{hyp}[theorem]{Hypothesis}
\crefname{hyp}{Hypothesis}{Hypotheses}
\numberwithin{equation}{section}
\newcommand{\M}{\mathcal{M}}
\newcommand{\dt}[1]{\frac{\mathrm{d}#1}{\mathrm{d}t}}
\newcommand{\D}{\mathcal{D}}
\newcommand{\A}{\mathcal{A}}
\renewcommand{\d}{\mathrm{d}}
\newcommand{\yr}{y_\mathrm{ref}}
\newcommand{\Acl}{\tilde{\A}_{d, \yr}}
\newcommand{\K}{\mathcal{K}}
\renewcommand{\epsilon}{\varepsilon}
\newcommand{\eps}{\varepsilon}
\renewcommand{\S}{\mathcal{S}}
\newcommand{\R}{\mathbb{R}}
\newcommand{\T}{\mathcal{T}}
\renewcommand{\L}{\mathcal{L}}
\renewcommand{\leq}{\leqslant}
\renewcommand{\geq}{\geqslant}
\title{Output regulation of infinite-dimensional nonlinear systems: a forwarding approach for contraction semigroups\thanks{
\textbf{Funding:} {This work was partially funded by the French Grants ANR ODISSE (ANR-19-CE48-0004-01) and MIAI@Grenoble Alpes (ANR-19-P3IA-0003).}
}}
\author{Nicolas Vanspranghe\thanks{Mathematics and Statistics, Faculty of Information Technology and Communication Sciences, Tampere University, P.O. Box 692, 33101 Tampere, Finland. Email: \nolinkurl{nicolas.vanspranghe@tuni.fi}.}
\and
Lucas Brivadis\thanks{Université Paris-Saclay, CNRS, CentraleSupélec, Laboratoire des Signaux et Systèmes, 91190, Gif-sur-Yvette, France. Email:
\nolinkurl{lucas.brivadis@centralesupelec.fr}.}
}
\date{}
\begin{document}

\maketitle

\begin{abstract}
This paper deals with the problem of robust output regulation of systems governed by nonlinear contraction semigroups.
After adding an integral action to the system,
we design a feedback law based on the so-called forwarding approach.
For small constant perturbations, we give sufficient conditions for
the existence of a locally exponentially stable equilibrium at which the output coincides with the reference.
Under additional assumptions, global asymptotic stability is achieved.
All these conditions are investigated in the case of semilinear systems, and examples of application are given.
\end{abstract}

\begin{keywords}
Output regulation, 
infinite-dimensional control systems,
nonlinear contraction semigroups,
forwarding design,
semilinear systems.
\end{keywords}

\begin{AMS}
93C20, 93C10, 93B52.
\end{AMS}

\section{Introduction}

Robust output regulation in one of the oldest problems in control theory.
It consists in designing a feedback law which ensures that the output of a system tracks a given reference, even in the presence of external disturbances.
If a feedback law can be designed to stabilize the system at some target point
then a natural strategy to regulate the output  robustly with respect to constant disturbances consists in adding an integral action to the system.
Then, one must find a new feedback law that stabilizes the augmented system (i.e., the state and the output integrator) at some equilibrium point even in the presence of constant disturbances.
The integrator guarantees that the output is at the reference when the full state is at the equilibrium.

While the theory of output regulation for linear finite-dimensional systems is well-understood since the seminal work of E. Davidson \cite{Davison, 1101104} (see also \cite{francis1975internal}), many problems remain open in the case of nonlinear and/or infinite-dimensional systems.
In \cite{Pohjolainen, POHJOLAINEN1985622}, S. Pohjolainen proposed an extension
of the finite-dimensional linear robust regulation theory to the infinite-dimensional context. In particular, a controller based on an output integrator was investigated.
More recently, this work has been continued in \cite{bymes2000output,rebarber2003internal,paunonen2010internal,paunonen2015controller, doi:10.1137/17M1136407} where an internal model principle was developed to reject a wider class of disturbances (and not only constant ones). 
Other works
focusing on output regulation of particular linear partial differential equations include \cite{guo2018adaptive,lhachemi2021,guo2022output}, to mention only a few.
In comparison, very few attempts to tackle infinite-dimensional nonlinear output regulation problems exist in the literature, especially in an abstract framework. For instance, \cite{logemann2000time} dealt with single-input single-output linear distributed parameter systems subject to some input nonlinearity, while \cite{natarajan2016approximate} focused on local approximate regulation of Lipschitz perturbations of linear systems. 

In the finite-dimensional context, a strategy to take nonlinearities into account is the so-called \emph{forwarding} approach, devised by F. Mazenc and L. Praly in \cite{praly-mazenc-forwarding} and developed with different purposes in \cite{kaliora2005stabilization,benachour2013forwarding, astolfi-praly, poulain2010robust}.
We propose to approach the issue of constant output regulation via the forwarding technique. Few extensions of this methodology in the infinite-dimensional setting exist.
Control systems investigated in \cite{terrand2019adding,marx:hal-03417238} are linear. The article \cite{marx:hal-02944073} dealt with cone-bounded nonlinearities (such as saturation) applied to the input.
More recently, \cite{balogoun:hal-03292801} dealt with the regulation of the linearized Korteweg-de Vries equation, and local regulation of the nonlinear model with the same feedback law was proved.
In the recent work \cite{mattia-forwarding}, the authors followed a contraction approach to achieve the output regulation of nonlinear finite-dimensional systems by means of an incremental forwarding approach.
Inspired by this work, we focus on systems of contraction, namely, such that the distance between any pair of trajectories is exponentially converging towards zero. While we do not restrict ourselves to Lipschitz perturbations of linear systems (which we will refer to as \emph{semilinear} systems), we pay special attention to this case.

\paragraph{Organization of the paper}

In the first section of the paper, we state more precisely the class of systems under consideration and the output regulation problem.
Then, we give some insights on the forwarding strategy we use to design a stabilizing feedback law.
Main results are stated and discussed in \cref{sec:main-res}, and proved in \cref{sec:proof}.
Some examples of application are provided in \cref{sec:examples}.

\paragraph{Notation}
If $H$ is a Hilbert space endowed with the norm $\|\cdot\|_H$,
$\mathcal{B}_{H}(x, r)$ denotes the open ball of $(H, \|\cdot\|_H)$ centered at $x\in H$ of radius $r>0$.
Let $Z$ be a Hilbert space.
We denote by
$\mathcal{C}(H, Z)$ the space of continuous maps from $H$ to $Z$ and by
$\mathcal{C}^1(H, Z)$ the space of continuously Fréchet differentiable maps. We also denote by $\mathcal{L}(H, Z)$ the Banach space of bounded linear operators from $H$ to $Z$. Given $L$ in $\mathcal{L}(H, Z)$, the adjoint operator $L^*$ is uniquely defined in $\mathcal{L}(Z, H)$ by $(Lx, y)_Z = (x, L^*y)_H$, where $(\cdot, \cdot)$ denotes the scalar product.
We shall say that a map is locally Lipschitz continuous if it is Lipschitz continuous on every bounded set. Given a $H$-valued map $\mathcal{A}$ defined on some subset $\D(\A)$ of $H$, we say that $w$ is a \emph{strong} solution to the abstract evolution equation $\d w / \d t + \A(w) = 0$ on $[0, T]$ if $w$ is absolutely continuous with respect to the topology of $H$, takes values in $\D(\A)$, and solves the differential equation almost everywhere (a.e.) on $(0,T)$.
A \emph{weak} solution $w$ is a limit of such strong solutions in $\mathcal{C}([0, T], H)$.

\section{Problem statement}  
\subsection{Systems under consideration}
Consider a nonlinear infinite-dimensional controlled system with measured output of the form:
\begin{subequations}\label{eq:open-loop}
\begin{align}
&\dt{w} + \A(w) = Bu(t),\\
&y = Cw,
\end{align}
\end{subequations}
where $w$ is the state of the system lying in some real
Hilbert space $H$,
$y$ is the measured output lying in some real Hilbert space $Z$,
$u$ is the control input lying in some real Hilbert space $U$,
$\A:\D(\A)\to H$ is a singled-valued maximal monotone operator defined on the dense subset $\D(\A)\subset H$ such that $\A(0) = 0$,
$B:U\to H$ is a bounded linear map,
and $C:\D(C)\to Z$ is a (potentially unbounded) linear map defined on the (dense) subspace $\D(C)\supset\D(\A)$.
According to \cite[Chapter 4, Proposition 3.1]{showalter_monotone_2013},
$-\A$ is the generator of a nonlinear strongly continuous contraction semigroup over $H$, denoted
by $\{\T_t\}$.
In order to address the problem of output regulation, we assume that $\A$ satisfies the following assumption.
\begin{hyp}[Monotonicity]\label{hyp:nonlin-coer}
The nonlinear operator $\A$ is strongly monotone, {i.e.}, there exists $\alpha>0$ such that
\begin{equation}
\label{eq:nonlin-coer}
(\A(w_1) - \A(w_2), w_1 - w_2)_H \geq \alpha \|w_1 - w_2\|^2_H, \quad \forall w_1, w_2 \in \D(\A).
\end{equation}
\end{hyp}
\cref{hyp:nonlin-coer} implies that the
semigroup $\{\T_t\}$ satisfies
\begin{equation}
\label{eq:exp-cont}
\|\T_tw_1 - \T_tw_2\|_H \leq \exp(-\alpha t) \|w_1 - w_2\|_H, \quad \forall w_1, w_2 \in H, \ \forall t \geq 0.
\end{equation}
We address the problem of regulating the output $y$ of \eqref{eq:open-loop} at some constant reference $\yr\in Z$ in presence of constant disturbances $d\in H$ acting on the dynamics of the system. We proceed by adding an integrator to the output of the system, so that the resulting full system may be written as follows:
\begin{subequations}
\label{eq:control-system}
\begin{align}
    &\dt{w} + \A(w) = Bu(t) + d, \\
    &\dt{z} = Cw - y_\mathrm{ref},
    \end{align}
\end{subequations}
where $z$ lying in $Z$ is the output integrator.
The reason for that strategy is the following.
Assume that a feedback law $\varphi: H\times Z\to U$ may be designed, so that \eqref{eq:control-system} in closed-loop with $u(t) = \varphi(w(t), z(t))$ is asymptotically stable at $(0, 0)\in H\times Z$ when $[d, \yr] = (0, 0)$.
Then, formally, one can expect that for small perturbations of this system, {i.e.}, for small values of $[d, \yr]$,
the closed-loop is still asymptotically stable at some new equilibrium point $[w^\star, z^\star]$ near $(0, 0)\in H\times Z$.
Roughly speaking, this would imply that the dynamics of $z$, namely $\d z / \d t = Cw-\yr$, still tend towards zero, that is, the output $y=Cw$ is regulated at $\yr$.
In this paper, we propose a design strategy of such a feedback law $\varphi$ based on the so-called forwarding approach, for which we make precise this formal reasoning.

\subsection{Forwarding design}
\label{sec:forwarding}

Following the forwarding design \cite{praly-mazenc-forwarding}, we consider a candidate Lyapunov function of the form:
\begin{equation}\label{eq:lyapunov}
    V(w, z) = \frac{1}{2}\|w\|^2_H + \frac{\rho}{2}\|z - \M(w)\|^2_Z
\end{equation}
where $\|w\|^2_H / 2$ corresponds to the Lyapunov function associated to the contraction semigroup $\{\T_t\}$,
$\M:H\to Z$ is some Fréchet differentiable nonlinear map that vanishes at zero to be tuned, and
$\rho$ is some positive constant to be fixed (large enough) later.
Let us compute the derivative of $V$ along a strong solution $[w, z]$ of the open-loop system \eqref{eq:control-system}
in the case where $[d, \yr] = [0, 0]$:
\begin{equation}
\begin{aligned}
\dt{} V(w, z)
&= - (w, \A(w))_H + (w, Bu(t))_H\\
&\qquad+\rho( z - \M(w), \d\M(w)\A(w) + Cw -\d\M(w)Bu(t)))_Z.
\end{aligned}
\end{equation}
Under \cref{hyp:nonlin-coer}, we have
$(w, \A(w))_H \geq \alpha \|w\|^2_H$.
The key idea of the forwarding design is to choose a nonlinear map $\M$ satisfying the following functional equation, which guarantees that the graph of $\M$ is positively invariant under the uncontrolled dynamics \cref{eq:control-system}:
\begin{subequations}
\label{eq:forwarding-func}
\begin{align}
 &\M(0) = 0, \\
 &\d\M(w)\A(w) + Cw = 0, \quad \forall w \in \D(\A).
\end{align}
\end{subequations}
In order to follow the forwarding approach, let us consider the next hypothesis.
\begin{hyp}\label{hyp:forwarding-func}
The functional equation \eqref{eq:forwarding-func} admits a continuously Fréchet differentiable solution
$\M \in \mathcal{C}^1(H, Z)$.
\end{hyp}
For such a map $\M$, we obtain:
\begin{equation}
\label{eq:pre-lyap}
\dt{} V(w, z)
\leq - \alpha \|w\|^2_H + (B^*w, u(t))_U
-\rho(B^*\d\M(w)^*[z - \M(w)], u(t))_U.
\end{equation}
This suggests to set the feedback law as
\begin{equation}
\label{eq:feedback-law}
    u(t) = B^*\d\M(w)^*[z - \M(w)].
\end{equation}
Then, applying Young's inequality, we get that for all $\eps>0$,
\begin{equation}
\label{eq:lyap}
\dt{} V(w, z)
\leq - \alpha \|w\|^2_H + \frac{\eps}{2} \|B\|^2_{\mathcal{L}(U, H)} \|w\|^2_H
+ \frac{1}{2\eps}\|u(t)\|_U^2
-\rho\|u(t)\|_U^2.
\end{equation}
In particular, setting $\rho = {\|B\|^2_{\mathcal{L}(U, H)}}/{\alpha}$
and $\eps = 1/\rho$ yields
\begin{equation}
\dt{} V(w, z)
\leq - \frac{\alpha}{2} \|w\|^2_H
-\frac{\rho}{2}\|u(t)\|_U^2\leq0,
\end{equation}
so that $V$ is indeed a Lyapunov function.
These formal computations give an insight on the feedback law \eqref{eq:feedback-law} to choose and on the Lyapunov function \eqref{eq:lyapunov} to consider in order to stabilize \cref{eq:control-system}.
However, many important questions remain open, in particular concerning the well-posedness of the closed-loop, the stability properties in presence of $[d, \yr]$, and the feasibility of the functional equation \eqref{eq:forwarding-func}.

\begin{rem}
Checking \cref{hyp:forwarding-func} on a general nonlinear system of the form \eqref{eq:open-loop} is difficult in general.
Moreover, since $\M$ is used in feedback law \cref{eq:feedback-law}, an expression of $\M$ in terms of parameters of the system should be given in order to implement the controller.
For these two reasons, we investigate later in the paper the case of semilinear systems, for which we give sufficient conditions on $\A$ for \cref{hyp:forwarding-func} to be satisfied, as well as an expression of $\M$ in terms of $\A$ and $C$.
\end{rem}

\section{Main results}
\label{sec:main-res}
We now state our results on the closed-loop system \cref{eq:control-system}-\eqref{eq:feedback-law}.
\subsection{Well-posedness of the closed-loop dynamics} Our first result concerns the existence and uniqueness of solutions to the closed-loop equations \cref{eq:control-system}-\eqref{eq:feedback-law}.

\begin{theorem}[Well-posedness]
\label{th:well-posed}
Assume \cref{hyp:nonlin-coer,hyp:forwarding-func} hold and $\d\M$ is locally Lipschitz continuous. Then,
\begin{enumerate}[label = (\roman*)]
    \item For all $[d, \yr]\in H\times Z$ and all $[w_0, z_0] \in H \times Z$, the closed-loop system \cref{eq:control-system}-\eqref{eq:feedback-law}
    admits a unique maximal weak solution $[w, z]$ 
    satisfying the initial condition $[w(0), z(0)]=[w_0, z_0]$ and defined on $[0, T_\mathrm{max})$
    for some $T_\mathrm{max}\in(0, +\infty]$;
    \label{wpi}
    \item Moreover, if $w_0 \in \D(\A)$, then $[w, z]$ is a strong solution to \cref{eq:control-system}-\cref{eq:feedback-law};
    \label{wpii}
    \item Furthermore, if $d=0$ or $\d\M$ is globally Lipschitz continuous, then \cref{eq:control-system}-\eqref{eq:feedback-law} is forward complete, {i.e.},
    $T_\mathrm{max} = + \infty$ for all $[w_0, z_0] \in H \times Z$.
    \label{wpiii}
\end{enumerate}
\end{theorem}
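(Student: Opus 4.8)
The plan is to rewrite the closed-loop system as a perturbed monotone evolution equation on the product space $\mathcal{H} = H \times Z$ and to invoke the Lipschitz perturbation theory for maximal monotone operators (as in Brezis / Showalter). First I would introduce the operator $\mathcal{A}_{\mathrm{cl}}$ on $\mathcal{H}$ defined on $\mathcal{D}(\A) \times Z$ by $\mathcal{A}_{\mathrm{cl}}(w, z) = (\A(w), -(Cw - \yr))$, so that the uncontrolled part is governed by $\d/\d t\,[w, z] + \mathcal{A}_{\mathrm{cl}}(w, z) = 0$ plus the bounded forcing $[d, 0]$. The difficulty is that $\mathcal{A}_{\mathrm{cl}}$ is \emph{not} monotone because of the unbounded skew-coupling through $C$ and $\d\M(w)$. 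To fix this I would change variables to $\zeta = z - \M(w)$, which, by the functional equation \eqref{eq:forwarding-func}, is exactly the coordinate in which the coupling disappears from the drift: along the uncontrolled dynamics $\d\zeta/\d t = \d\M(w)\A(w) + Cw = 0$. In these coordinates the closed-loop vector field is $\d w/\d t = -\A(w) + B B^* \d\M(w)^* \zeta + d$ and $\d\zeta/\d t = -\d\M(w)(BB^*\d\M(w)^*\zeta + d)$. The principal part $(w,\zeta)\mapsto(\A(w), 0)$ is maximal monotone on $\mathcal{H}$ (it is $\A$ on the first factor and $0$ on the second), and the remainder $F(w,\zeta) = (BB^*\d\M(w)^*\zeta + d,\ -\d\M(w)(BB^*\d\M(w)^*\zeta + d))$ is, since $B$ is bounded and $\d\M$ is locally Lipschitz (hence $\d\M(w)^*$ locally Lipschitz in $w$ and locally bounded), locally Lipschitz continuous on $\mathcal{H}$.

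With this decomposition, part \ref{wpi} follows from the standard result that $\d v/\d t + \mathcal{A}_0(v) = F(v)$, with $\mathcal{A}_0$ maximal monotone and $F$ locally Lipschitz, has a unique maximal weak (mild) solution on some $[0, T_{\max})$ for every initial datum; uniqueness comes from a Gronwall argument using monotonicity of $\mathcal{A}_0$ and the local Lipschitz bound on $F$ restricted to a ball containing the two trajectories up to a common time. Undoing the change of variables (which is a bi-Lipschitz homeomorphism of $\mathcal{H}$, with inverse $z = \zeta + \M(w)$, since $\M \in \mathcal{C}^1$) transports existence, uniqueness and maximality back to the original $[w,z]$ variables. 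For part \ref{wpii}, when $w_0 \in \D(\A)$ we have $v_0 = (w_0, z_0 - \M(w_0)) \in \D(\mathcal{A}_0) = \D(\A)\times Z$, and the regularity theory for Lipschitz-perturbed monotone equations upgrades the mild solution to a strong one — $v$ is Lipschitz on compact subintervals, $v(t) \in \D(\mathcal{A}_0)$ for all $t$, and the equation holds a.e.; again this transfers to $[w,z]$ because $\M$ is $C^1$ and the chain rule applies to the absolutely continuous curve $t \mapsto \M(w(t))$.

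For part \ref{wpiii}, I would derive an a priori bound precluding blow-up in finite time. Recall the Lyapunov computation: with $\rho = \|B\|^2_{\mathcal{L}(U,H)}/\alpha$, along strong solutions with $d = 0$ one gets $\tfrac{\d}{\d t} V(w,z) \leq -\tfrac{\alpha}{2}\|w\|_H^2 - \tfrac{\rho}{2}\|u\|_U^2 \leq 0$, so $\|w(t)\|_H$ and $\|z(t) - \M(w(t))\|_Z$ stay bounded by their initial values; since $\|w\|$ is bounded, $\|\M(w)\|$ is bounded (continuity of $\M$ on the bounded orbit), hence $\|z(t)\|_Z$ is bounded, ruling out $T_{\max} < \infty$ by the usual blow-up alternative (a maximal solution that does not extend must have $\|[w,z]\| \to \infty$). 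When $d \neq 0$ but $\d\M$ is globally Lipschitz, $F$ is globally Lipschitz with an affine growth bound $\|F(v)\| \leq a\|v\| + b$, and then the mild-solution integral inequality together with Gronwall gives $\|v(t)\| \leq (\|v_0\| + bt)e^{at}$ on any bounded interval, again excluding finite-time blow-up. A density argument (approximating $[w_0,z_0]$ by data with $w_0 \in \D(\A)$ and passing to the limit in $\mathcal{C}([0,T],\mathcal{H})$, using the contraction-type dependence on initial data) extends the a priori bound from strong to weak solutions. The main obstacle is the first step — recognizing that the change of variables $\zeta = z - \M(w)$ is precisely what restores monotonicity of the principal part and bounded-perturbation structure — after which everything reduces to citing the monotone-operator perturbation theory already available in the references.
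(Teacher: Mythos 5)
Your strategy for \ref{wpi} and \ref{wpii} matches the paper's: pass to the coordinate $\eta = z - \M(w)$, in which the functional equation \cref{eq:forwarding-func} eliminates the unbounded coupling through $C$; split the resulting system on $H\times Z$ into a maximal monotone principal part plus a locally Lipschitz perturbation; and invoke standard perturbation theory for maximal monotone evolution equations (the paper cites \cite{chueshov_attractor_2002}, and takes $[w,\eta]\mapsto[\A(w),\eta]$ as the monotone part rather than your $[\A(w),0]$ --- an inessential difference). That portion of the proposal, including the transfer back to $[w,z]$ via the bi-Lipschitz change of variables, is sound.

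There is, however, a genuine gap in your proof of \ref{wpiii} in the case $d \neq 0$ with $\d\M$ globally Lipschitz. You claim the perturbation is then ``globally Lipschitz with an affine growth bound $\|F(v)\| \leq a\|v\| + b$.'' This is false: global Lipschitz continuity of $\d\M$ only yields $\|\d\M(w)\|_{\mathcal{L}(H,Z)} \leq \|\d\M(0)\|_{\mathcal{L}(H,Z)} + L\|w\|_H$, i.e.\ linear growth of the operator norm, so the term $\d\M(w)BB^*\d\M(w)^*\eta$ appearing in the $\eta$-equation grows like $(1+\|w\|_H)^2\|\eta\|_Z$, and the Lipschitz constant of the perturbation on a ball of radius $R$ grows like $R^2$. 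A crude Gr\"onwall bound on the mild formulation therefore does not close. The argument that works (and is the one in the paper) is the energy estimate you already wrote down for $d=0$: differentiate $\|w\|^2_H + \rho\|\eta\|^2_Z$ along strong solutions and observe that the dangerous quadratic term pairs with $\eta$ to produce $-\rho\|\K(w)^*\eta\|^2_U \leq 0$, into which the cross term $(B\K(w)^*\eta, w)_H$ is absorbed by Young's inequality once $2\rho \geq \|B\|^2_{\mathcal{L}(U,H)}$. The hypothesis ``$d=0$ or $\d\M$ globally Lipschitz'' is needed only to bound the residual term $\rho(\d\M(w)d,\eta)_Z$ via $\|\d\M(w)d\|^2_Z \leq K(1+\|w\|^2_H)\|d\|^2_H$, after which Gr\"onwall gives at most exponential growth and hence $T_{\max}=+\infty$. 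Two smaller points: your $d=0$ argument asserts that $V$ is nonincreasing, but the theorem allows $\yr \neq 0$, which contributes $-\rho(\yr,\eta)_Z$ to $\d V/\d t$, so boundedness of the orbit must again come from Gr\"onwall rather than monotonicity of $V$; and the extension of the a priori bound from strong to weak solutions uses continuous dependence on initial data over a bounded time interval, not a contraction property, since the perturbation is only locally Lipschitz.
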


The proof of \cref{th:well-posed} is given in \cref{sec:proof-wp} and works around two technical difficulties
(the possible unboundedness of $C$ and the \emph{a priori} non-monotonicity of the system with integral action)
by an appropriate use of \cref{hyp:forwarding-func}.

\subsection{Sufficient conditions for output regulation}
In view of the output regulation problem, we wish to investigate the existence of an attractive equilibrium.
The following theorem is the main result of the paper.

\begin{theorem}[Sufficient conditions for output regulation]
\label{th:global-stab}
Assume \cref{hyp:nonlin-coer,hyp:forwarding-func} are satisfied.
Assume that
\begin{equation}
    \label{eq:range-cond}
\operatorname{Range} \d\M(0)B = Z,
\quad {i.e.},\quad \exists \lambda>0\mid \forall z \in Z,\
    \|B^* \d\M(0)^*z\|^2_U \geq \lambda \|z \|^2_Z.
    \end{equation}
If $\d\M$ is globally (resp. locally) Lipschitz continuous,
then there exist positive constants
$M$, $\kappa$ and $r$ and
a neighborhood $\mathcal{N}$ of the origin in $H\times Z$
such that
for any $[d, y_{\mathrm{ref}}]$ in $\mathcal{B}_{H \times Z}(0, r)$ (resp. in $\{0\}\times\mathcal{B}_{Z}(0, r)$),
the following results hold.
\begin{itemize}
\item 
There exists an equilibrium point $[w^\star, z^\star]\in \mathcal{D}(\mathcal{A}) \times Z$ in $\mathcal{N}$
of the closed-loop system \cref{eq:control-system}-\cref{eq:feedback-law}
such that
$
Cw^\star = y_{\mathrm{ref}}
$ and
for all $[w_0, z_0]$ in $\mathcal{N}$, the corresponding solution $[w, z]$ of \cref{eq:control-system}-\cref{eq:feedback-law} satisfies, for all $t\geq 0$,
\begin{equation}\label{eq:exp-conv}
        \|[w(t) - w^\star, z(t) - z^\star]\|_{H\times Z} \leq
        M \exp(-\kappa t)
         \|[w_0 - w^\star, z_0 - z^\star]\|_{H\times Z}.
    \end{equation}
\item Moreover, if 
    \begin{equation}\label{eq:unif-coercive}
    \|B^* \d\M(w)^*z\|^2_U \geq \lambda \|z \|^2_Z, \quad \forall z \in Z,\ \forall w \in H,
    \end{equation}
then for all $[w_0, z_0]$ in $H\times Z$,  the corresponding solution $[w, z]$ of \cref{eq:control-system}-\cref{eq:feedback-law} satisfies:
\begin{equation}\label{eq:glob-conv}
[w(t), z(t)] \to [w^\star, z^\star] \quad \mbox{in}~H\times Z \quad \mbox{as}~ t \to +\infty.
\end{equation}
\end{itemize}
\end{theorem}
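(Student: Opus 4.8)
The plan is to absorb the (possibly unbounded) operator $C$ into a change of unknown. Setting $\zeta := z - \M(w)$ and differentiating along a strong solution of \cref{eq:control-system}-\eqref{eq:feedback-law}, the forwarding identity \eqref{eq:forwarding-func} makes $C$ disappear and leaves
\begin{equation}
\label{eq:prop-reduced}
\dt{w} + \A(w) = B K(w)^* \zeta + d, \qquad \dt{\zeta} = -K(w) K(w)^* \zeta - \d\M(w)\, d - \yr,
\end{equation}
where $K(w) := \d\M(w) B \in \mathcal{L}(U, Z)$, so that \eqref{eq:range-cond} and \eqref{eq:unif-coercive} mean $K(0)K(0)^* \geq \lambda\Id$ and $K(w)K(w)^* \geq \lambda\Id$ for all $w$, respectively. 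All estimates are first obtained for strong solutions (available for $w_0 \in \D(\A)$ by \cref{th:well-posed}) and then extended to weak solutions by density of $\D(\A)$ and continuous dependence; forward completeness on the relevant sets follows a posteriori from boundedness of the Lyapunov functions below (or directly from \cref{th:well-posed}).

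\emph{Existence of the equilibrium.} An equilibrium $[w^\star, z^\star]\in\D(\A)\times Z$ is characterized by $\A(w^\star) = B K(w^\star)^*\zeta^\star + d$ together with $C w^\star = \yr$, where $\zeta^\star := z^\star - \M(w^\star)$. Since $\A$ is maximal monotone with $\A(0)=0$ and $\alpha$-strongly monotone, it is coercive, hence a bijection of $\D(\A)$ onto $H$ with $\alpha^{-1}$-Lipschitz inverse; and, by \eqref{eq:range-cond} and continuity of $\d\M$, $K(w)K(w)^*$ is boundedly invertible on a ball $\mathcal{B}_H(0,\rho_0)$. I would then put $\zeta^\star(w) := -(K(w)K(w)^*)^{-1}(\d\M(w)\,d + \yr)$ and solve the fixed-point equation $w^\star = \A^{-1}\big(d + B K(w^\star)^* \zeta^\star(w^\star)\big) =: \Phi(w^\star)$. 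Using that $\d\M$ (hence $K$, $K^*$, $KK^*$ and $(KK^*)^{-1}$) is locally Lipschitz on $\mathcal{B}_H(0,\rho_0)$, and that $\Phi(w) - \A^{-1}(d)$ is $\A^{-1}$ applied to a product of a bounded locally Lipschitz operator with a vector of norm $O(\|d\|_H + \|\yr\|_Z)$, the map $\Phi$ is a contraction of $\mathcal{B}_H(0,\rho_0)$ into itself once $\|d\|_H + \|\yr\|_Z$ is small enough; Banach's theorem yields a unique $w^\star\in\D(\A)$ with $\|w^\star\|_H + \|\zeta^\star\|_Z = O(\|d\|_H + \|\yr\|_Z)$. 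Finally, applying $\d\M(w^\star)$ to the first equilibrium equation and invoking \eqref{eq:forwarding-func} gives $K(w^\star)K(w^\star)^*\zeta^\star = -Cw^\star - \d\M(w^\star)d$, which compared with the definition of $\zeta^\star$ is exactly $C w^\star = \yr$. Set $z^\star := \zeta^\star + \M(w^\star)$; then $[w^\star, z^\star]\to 0$ as $[d,\yr]\to 0$.

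\emph{Local exponential stability.} Let $\tilde w := w - w^\star$, $\tilde\zeta := \zeta - \zeta^\star$. Subtracting the equilibrium relations from \eqref{eq:prop-reduced} recovers the structure of \cref{sec:forwarding} up to the perturbations $B(K(w)^*-K(w^\star)^*)\zeta^\star$, $(K(w)K(w)^*-K(w^\star)K(w^\star)^*)\zeta^\star$ and $(\d\M(w)-\d\M(w^\star))d$, each of size $O\big((\|\zeta^\star\|_Z + \|d\|_H)\|\tilde w\|_H\big)$ by local Lipschitzness of $\d\M$. I would differentiate $\tilde V := \tfrac12\|\tilde w\|_H^2 + \tfrac{\rho}{2}\|\tilde\zeta\|_Z^2$; with the choice $\rho = \|B\|_{\mathcal{L}(U,H)}^2/\alpha$, $\eps = 1/\rho$ of \cref{sec:forwarding}, the strong monotonicity \eqref{eq:nonlin-coer}, Young's inequality, and the bound $K(w)K(w)^* \geq \tfrac{\lambda}{2}\Id$ — valid near $0$ by \eqref{eq:range-cond} and continuity — give, on a small sublevel set $\{\tilde V < \delta\}$,
\begin{equation}
\label{eq:prop-lyap}
\dt{}\tilde V \leq -\frac{\alpha}{2}\|\tilde w\|_H^2 - \frac{\rho\lambda}{2}\|\tilde\zeta\|_Z^2 + c\,(\|\zeta^\star\|_Z + \|d\|_H)\big(\|\tilde w\|_H^2 + \|\tilde w\|_H\|\tilde\zeta\|_Z\big).
\end{equation}
Since $\|\zeta^\star\|_Z + \|d\|_H = O(\|d\|_H + \|\yr\|_Z)$ is small, the error is absorbed and $\dt{}\tilde V \leq -\kappa'\tilde V$; hence $\{\tilde V<\delta\}$ is forward invariant, the solution is global, and $\tilde V$ decays exponentially. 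Reverting $\tilde\zeta = \tilde z + \M(w^\star) - \M(w)$ and using the equivalence of $\tilde V$ with $\|[\tilde w, \tilde z]\|_{H\times Z}^2$ near the equilibrium yields \eqref{eq:exp-conv}. Taking $\mathcal{N}$ a fixed neighborhood of $0$ contained in $\{\tilde V<\delta\}$ for every small $[d,\yr]$ (possible since $[w^\star,z^\star]\to 0$) closes this part; moreover, when $\d\M$ is globally Lipschitz and \eqref{eq:unif-coercive} holds, \eqref{eq:prop-lyap} holds on all of $H\times Z$ with $\lambda$ in place of $\lambda/2$, giving global exponential stability directly.

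\emph{Global convergence and the main obstacle.} Now assume \eqref{eq:unif-coercive}. The key observation is that the \emph{unshifted} function $V$ of \eqref{eq:lyapunov} already funnels every trajectory into $\mathcal{N}$. Repeating the computation leading to \eqref{eq:lyap} with the disturbances retained and using $K(w)K(w)^*\geq\lambda\Id$ everywhere — which requires only monotonicity and Young's inequality, no Lipschitz bound — one obtains $\dt{}V \leq -c_0 V + c_1(\|d\|_H^2 + \|\yr\|_Z^2)$ for \emph{every} solution. Grönwall's lemma then gives $\limsup_{t\to\infty} V(w(t),z(t)) \leq (c_1/c_0)(\|d\|_H^2+\|\yr\|_Z^2)$, uniformly in the initial data, and since $V$ small forces $[w,z]$ close to $0$, shrinking $r$ so that this bound lies below the $V$-level of $\mathcal{N}$ forces every trajectory into $\mathcal{N}$ in finite time; the previous step then yields $[w(t),z(t)]\to[w^\star,z^\star]$. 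I expect the real difficulty to be exactly this two-regime structure: when $\d\M$ is only locally Lipschitz the perturbation estimate \eqref{eq:prop-lyap} degrades as the initial data grow and cannot by itself reach all trajectories, so one genuinely needs the separate, Lipschitz-free dissipation of $V$, and keeping the smallness condition on $[d,\yr]$ uniform in the initial state throughout is the real bookkeeping burden. The possible unboundedness of $C$, by contrast, is a non-issue once one works in the variable $\zeta = z - \M(w)$: $C$ never appears in a single estimate and survives only in the algebraic identity $C w^\star = \yr$.
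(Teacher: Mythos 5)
Your argument is correct, and it reaches the conclusion by a genuinely different route at the key step: where the paper obtains the equilibrium \emph{dynamically} --- iterating the time-one map of the closed-loop semigroup on a small ball, extracting a fixed point, and then invoking the $\omega$-limit-set/isometry argument of Dafermos (plus a separate argument via Crandall's lemma to show $w^\star \in \D(\A)$) --- you obtain it \emph{statically}, by solving the equilibrium equations themselves with a Banach fixed point: strong monotonicity plus maximal monotonicity make $\A$ a bijection of $\D(\A)$ onto $H$ with $\alpha^{-1}$-Lipschitz inverse, local coercivity makes $\K(w)\K(w)^*$ boundedly invertible near $0$, and the resulting map is a self-contraction of a small ball once $[d,\yr]$ is small. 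This buys you several things at once: the equilibrium lands in $\D(\A)$ by construction, you get the quantitative bound $\|[w^\star,z^\star]\| = O(\|[d,\yr]\|)$ and local uniqueness for free, and --- because you then run the Lyapunov analysis on the function centered at $[w^\star,\zeta^\star]$, whose sublevel sets \emph{are} forward invariant --- you sidestep the paper's technical wrinkle that its contraction region is not a priori positively invariant (the footnote adapting Dafermos's theorem). What the paper's route buys in exchange is the stronger \emph{incremental} contraction estimate between arbitrary pairs of trajectories in the small ball, which it reuses elsewhere (e.g., for the incremental ISS remarks in the conclusion); your argument only contracts toward the one equilibrium. Your two-regime structure for global convergence (unshifted $V$ as an absorbing mechanism funnelling every trajectory into the local basin) is essentially the paper's absorbing-ball lemma. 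One small imprecision: the claim that the global dissipation of $V$ needs ``no Lipschitz bound'' is not quite right --- the term $\rho(\d\M(w)d,\zeta)_Z$ requires controlling $\|\d\M(w)d\|_Z$ by $K(1+\|w\|_H^2)^{1/2}\|d\|_H$, which is exactly where the theorem's dichotomy ($d=0$ or $\d\M$ globally Lipschitz) enters, and the resulting $\|w\|_H^2\|d\|_H^2$ contribution must be absorbed into $-\alpha\|w\|^2_H$, which is another place where smallness of $d$ is used; since you have already restricted to small $[d,\yr]$ this is bookkeeping, not a gap.
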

The proof of \cref{th:global-stab} is given in \cref{sec:proof-stab}.
\Cref{eq:exp-conv} guarantees the local exponential stability of the equilibrium point $[w^\star, z^\star]$, while \cref{eq:glob-conv} provides global asymptotic stability under the additional condition \cref{eq:unif-coercive}.
This condition is stronger than a global version of the range condition \cref{eq:range-cond},
in the sense that the constant $\lambda$ must be independent of $w\in H$. We shall show in the proof how this uniformity implies the global attractivity of the local exponential basin of attraction $\mathcal{N}$.

Conditions \cref{eq:range-cond} and \cref{eq:unif-coercive} may be difficult to check in general. In \cref{sec:semilinear}, we show how to investigate them in the semilinear case.
In the finite-dimensional context, \cref{eq:unif-coercive} is equivalent to the incremental condition given in \cite[Assumption 2]{mattia-forwarding}.
In our work, it allows to obtain global asymptotic stability even in the presence of small perturbations.
The local condition \cref{eq:range-cond} is commonly used in the forwarding design (see, e.g., \cite{astolfi-praly}) to obtain local exponential stability. 

It is clear that if the operator $C$ is continuous, {i.e.}, $C$ belongs to $\mathcal{L}(H, Z)$, then the output $Cw(t)$ goes to $\yr$ in $Z$ whenever $w(t)$ goes to $w^\star$ in $H$. If $C$ is unbounded, the question is more delicate and is discussed in \cref{sec:C-unbounded}.

\subsection{The semilinear case}
\label{sec:semilinear}
Consider now the case where $\A = A + F$
with $\D(\A) = \D(A)$, where
$-A$ is the infinitesimal generator of a strongly continuous semigroup $\{\S_t\}$ of linear operators on $H$, and $F$ is a nonlinear mapping satisfying
\begin{equation}\label{hyp:F}
F \in \mathcal{C}^1(H), \quad \d F ~\mbox{locally Lipschitz continuous},
\end{equation}
and without loss of generality, $\d F(0) = 0$.
Following \cite{pazy_semigroups_2012,CurZwa20book}, we shall say that $\A$ is semilinear.
In that context, let us introduce the following set of assumptions.

\begin{hyp}
\label{hyp:semilinear}
The operator $\A = A+F$ is semilinear
and satisfies the following properties:
\begin{enumerate}[label = (\roman*)]
    \item $C$ is $A$-bounded, {i.e.}, there exist positive constants $a$ and $b$ such that
    \begin{equation}
        \|Cw\|_Z \leq a \|Aw\|_H + b \|w\|_H,  \quad \forall w\in\D(A);
    \end{equation}
    \item There exists $\alpha>0$ such that
    \begin{equation}
\label{eq:as-stab-lin}
(Ah + \mathrm{d}F(w)h, h)_H \geq  \alpha \|h\|^2_H, \quad \forall w \in H, \ \forall h \in \mathcal{D}(A).
\end{equation}
\end{enumerate}
\end{hyp}
 
Note that \cref{eq:as-stab-lin} in \cref{hyp:semilinear} implies $\alpha$-exponential stability of the linear semigroup $\{\S_t\}$; in particular, $0$ is in the resolvent set of $A$.
Remark also that \cref{hyp:semilinear} implies both \cref{hyp:nonlin-coer} and maximal monotonicity of $\A$. 
Indeed, writing $F(w_1) - F(w_2)$ as an integral of $\d F$ along the line segment joining $w_1$ to $w_2$, one can show that \cref{eq:as-stab-lin} implies \cref{eq:nonlin-coer}.
Then,
standard results on Lipschitz perturbations of linear systems (see \cite[Section 6.1]{pazy_semigroups_2012} or \cite[Chapter 11]{CurZwa20book}) together with \cref{eq:nonlin-coer} yield that $- \A$ generates a strongly continuous semigroup of contractions (denoted by $\{\T_t\}$) on $H$, which in turn implies that $\A$ is maximal monotone by virtue of \cite[Theorem 4]{komura_differentiability_1969}. 

Under \cref{hyp:semilinear}, we show in the following theorem that \cref{eq:forwarding-func} admits a solution that can be expressed in terms of $C$, $A$, and $F$.
\begin{theorem}[Existence of $\M$ in the semilinear case]
\label{th:M-semilinear}
Assume \cref{hyp:semilinear} is satisfied. Then there exists a map $\M \in \mathcal{C}^1(H, Z)$ satisfying \cref{eq:forwarding-func} with $\d \M$ locally Lipschitz continuous and given for all $w\in H$ by
\begin{equation}
\label{eq:M-int}
\M(w) \triangleq -C \left \{   \lim_{\tau \to + \infty} \int_0^\tau \mathcal{T}_t w \, \mathrm{d}t \right \} = - CA^{-1}w + CA^{-1}\int_0^{+\infty}F(\T_t w) \, \mathrm{d}t.
\end{equation}
For all $w, h\in H$,
\begin{equation}\label{eq:dM}
\d \M(w) h = -CA^{-1}h + CA^{-1} \int_0^{+\infty} \d F(\T_t w) \d \T_t(w) h \, \d t,
\end{equation}
where $\d \T_t(w)$ denotes the differential of $\T_t$ at $w$, which exists. 
Moreover, if $F$ and $\d F$ are globally Lipschitz continuous, then so is $\d \M$.
\end{theorem}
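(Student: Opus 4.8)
The plan is to establish, in order: (i) that the integral defining $\M$ converges in $H$ and that $\M$ has the closed form \eqref{eq:M-int}; (ii) that the flow $\T_t$ is Fréchet differentiable, with $\d\T_t(w)$ governed by the linearised equation and decaying exponentially, uniformly in $w$; (iii) that $\M\in\mathcal C^1(H,Z)$ with $\d\M$ locally (resp.\ globally) Lipschitz, by differentiating under the integral sign; and (iv) that the functional equation \eqref{eq:forwarding-func} holds.

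For (i): since $\A(0)=0$, the contraction estimate \eqref{eq:exp-cont} gives $\|\T_t w\|_H\le e^{-\alpha t}\|w\|_H$, so $t\mapsto\T_t w$ is Bochner integrable on $\R_+$ and $\zeta(w):=\lim_{\tau\to+\infty}\int_0^\tau\T_t w\,\d t$ is well defined in $H$; likewise $F(0)=0$ (from $\A(0)=0$ and $\d F(0)=0$) and local Lipschitzness of $F$ give $\|F(\T_t w)\|_H\le L\,e^{-\alpha t}\|w\|_H$ on bounded sets, so $\Phi(w):=\int_0^{+\infty}F(\T_t w)\,\d t$ converges in $H$. Integrating the unforced equation $\d(\T_t w)/\d t+A\T_t w+F(\T_t w)=0$ over $[0,\tau]$ — legitimate first for $w\in\D(\A)=\D(A)$, where $t\mapsto\T_t w$ is a classical solution with values in $\D(A)$, then for arbitrary $w\in H$ by density and closedness of $A$ — one pulls the closed operator $A$ out of the integral and lets $\tau\to+\infty$ (using $\T_\tau w\to0$), obtaining $\zeta(w)\in\D(A)$ and $A\zeta(w)=w-\Phi(w)$. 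Because $C$ is $A$-bounded and $A$ is boundedly invertible (\cref{hyp:semilinear}), $CA^{-1}\in\mathcal L(H,Z)$; and since $\zeta(w)\in\D(A)\subset\D(C)$, the map $\M(w):=-C\zeta(w)=-CA^{-1}(A\zeta(w))$ is well defined and coincides with \eqref{eq:M-int}. In particular $\M(0)=0$, as $\T_t0=0$.

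Step (ii) is the technical heart. For a semilinear equation with $F\in\mathcal C^1(H)$ and $\d F$ locally Lipschitz, the flow $\T_t$ is Fréchet differentiable on $H$ and $\eta(t):=\d\T_t(w)h$ is the mild solution of the linearised non-autonomous equation $\d\eta/\d t+A\eta+\d F(\T_t w)\eta=0$, $\eta(0)=h$ (see \cite[Chapter~6]{pazy_semigroups_2012}); since $t\mapsto\d F(\T_t w)$ is continuous and bounded in $\mathcal L(H)$ — of norm at most $L\|\T_t w\|_H$ on bounded sets — this is a continuous bounded perturbation of the generator $-A$, so the equation is governed by a two-parameter evolution family $U_w(t,s)$. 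The coercivity assumption \eqref{eq:as-stab-lin} states exactly that $A+\d F(v)$ is $\alpha$-strongly monotone for every $v\in H$, hence $\tfrac12\tfrac{\d}{\d t}\|\eta(t)\|_H^2\le-\alpha\|\eta(t)\|_H^2$ and $\|\d\T_t(w)\|_{\mathcal L(H)}=\|U_w(t,0)\|_{\mathcal L(H)}\le e^{-\alpha t}$. A Duhamel estimate for $\delta(t):=\d\T_t(w)h-\d\T_t(w')h$, which solves the linearised equation at $w$ with source $-(\d F(\T_t w)-\d F(\T_t w'))\d\T_t(w')h$, combined with $\|\T_t w-\T_t w'\|_H\le e^{-\alpha t}\|w-w'\|_H$, then yields $\|\d\T_t(w)-\d\T_t(w')\|_{\mathcal L(H)}\le(L_R/\alpha)\,e^{-\alpha t}\,\|w-w'\|_H$ for $w,w'\in\mathcal B_H(0,R)$, where $L_R$ is the Lipschitz modulus of $\d F$ on $\mathcal B_H(0,R)$; $L_R$ can be chosen independent of $R$ when $\d F$ is globally Lipschitz.

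For (iii): the chain rule gives $\partial_w[F(\T_t w)]=\d F(\T_t w)\d\T_t(w)$, which on $\mathcal B_H(0,R)$ is dominated by the integrable weight $t\mapsto L_R R\,e^{-2\alpha t}$; differentiation under the integral sign then yields $\Phi\in\mathcal C^1(H,H)$ with $\d\Phi(w)h=\int_0^{+\infty}\d F(\T_t w)\d\T_t(w)h\,\d t$, and composition with $CA^{-1}\in\mathcal L(H,Z)$ gives $\M\in\mathcal C^1(H,Z)$ and \eqref{eq:dM}. Splitting $\d\Phi(w)-\d\Phi(w')$ as $\int_0^{+\infty}\d F(\T_t w)(\d\T_t(w)-\d\T_t(w'))\,\d t+\int_0^{+\infty}(\d F(\T_t w)-\d F(\T_t w'))\d\T_t(w')\,\d t$ and inserting the bounds of step (ii) — together with $\|\d F(\T_t w)\|_{\mathcal L(H)}\le L_R\|\T_t w\|_H$, resp.\ the uniform bound $\|\d F(\cdot)\|_{\mathcal L(H)}\le\operatorname{Lip}(F)$ when $F$ is globally Lipschitz — shows $\d\Phi$, hence $\d\M$, is locally (resp.\ globally) Lipschitz. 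For (iv): $\M(0)=0$ is (i), and for $w\in\D(\A)$, where $t\mapsto\T_t w$ is a classical solution, the chain rule gives $\frac{\d}{\d t}\M(\T_t w)=-\d\M(\T_t w)\A(\T_t w)$, while the semigroup property gives $\zeta(\T_t w)=\int_t^{+\infty}\T_\sigma w\,\d\sigma$, so $A\zeta(\T_t w)=\T_t w-\int_t^{+\infty}F(\T_\sigma w)\,\d\sigma$ is $\mathcal C^1$ in $t$ with derivative $-A\T_t w$, whence $\frac{\d}{\d t}\M(\T_t w)=-CA^{-1}\frac{\d}{\d t}(A\zeta(\T_t w))=C\T_t w$; equating the two expressions and evaluating at $t=0$ gives $\d\M(w)\A(w)+Cw=0$. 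The genuinely delicate point is step (ii) — the $\mathcal C^1$ dependence of the nonlinear flow on the initial datum, the evolution-family formulation of the variational equation, and the uniform Lipschitz-in-$w$ bound with exponential decay; once these are in hand, the remaining steps reduce to dominated convergence with exponentially decaying weights.
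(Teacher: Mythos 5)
Your proposal is correct and follows essentially the same route as the paper: the integral formula for $\lim_{\tau\to+\infty}\int_0^\tau\T_t w\,\d t$ (you pull the closed operator $A$ out of the integral and use density, where the paper integrates the Duhamel formula and applies Fubini — equivalent in substance), Fréchet differentiability of the flow with $\d\T_t(w)h$ the mild solution of the first-variation equation decaying like $e^{-\alpha t}$, dominated convergence to obtain \cref{eq:dM}, the same two-term decomposition for the (local/global) Lipschitz bound on $\d\M$, and evaluation of $\M$ along trajectories of the semigroup to verify the functional equation. The one point to be careful about is that the Fréchet differentiability of the nonlinear flow is not available off the shelf in the precise form you cite from Pazy, which is why the paper proves it directly via a Grönwall estimate on the Taylor remainder of $F$ — but the variational-equation argument you sketch is exactly that proof, so nothing essential is missing.
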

The proof of \cref{th:M-semilinear} is given in \cref{sec:proof-semilinear}. Its purpose is to guarantee the existence of the control law \cref{eq:feedback-law} for a  wide class of  semilinear systems and provide  easy  conditions under which requirements of \cref{th:global-stab} are met. A notable consequence of \cref{eq:dM} is that the coercivity condition \cref{eq:range-cond} in \cref{th:global-stab} simply reads as
\begin{equation}
 \label{eq:non-resonance}
\operatorname{Range} CA^{-1}B = Z,
\end{equation}
which, in the context of output regulation corresponds to a \emph{non-resonance} condition between $A$ and the zero dynamics of the integrator via the Schur complement (see, e.g., \cite{isidori2003robust, astolfi-praly}).
On the other hand, \cref{th:M-semilinear} also states that $\d \M$ inherits the Lipschitz properties of $F$ and $\d F$.
Now, regarding \cref{hyp:semilinear}, \cref{eq:as-stab-lin} is roughly speaking a sufficient condition under which solutions to the uncontrolled $w$-equation linearized around a given trajectory (also called \emph{first variation equation})
uniformly converge to that trajectory. \Cref{eq:as-stab-lin} is easily verified in (at least) two situations of interest.
\begin{itemize}
    \item The nonlinearity $F$ contributes to the contraction behavior of the $w$-dynamics.
    This is the case for example if $A$ is coercive and $F$ is monotone (i.e. such that $(\d F(w)h, h)_H$ is  nonnegative for all $w, h$ in $H$).
    More generally, the reader may refer to \cite{lohmiller-contraction} in finite dimension or \cite[Chapter V]{temam_infinite-dimensional_2012}
    for the contraction analysis of the system by means of its first variation equation.
    \item Variations of $F$ are small with respect to linear dissipation brought by $A$. More precisely, if $F$ is $K$-Lipschitz continuous, then  $\|\d F(\cdot)\|_{\mathcal{L}(H)}$ is bounded by $K$, hence \cref{eq:as-stab-lin} is satisfied if $(Ah, h)_H \geq \beta \|h\|_H^2$ for some constant $\beta > K$. In the same spirit, under \cref{eq:as-stab-lin} and \cref{eq:non-resonance}, by observing\footnote{
    See \cref{lem:decay-linearized} below for the bound $K/\alpha$.
    } that for each $w$ the integral map in \cref{eq:dM} has operator norm bounded by $K/\alpha$, one deduces from \cref{eq:dM} that \emph{global} uniform coercivity \cref{eq:unif-coercive} holds whenever $K < \alpha$.
\end{itemize}
\begin{rem}[Linear case] If $\A$ is linear, i.e., $F = 0$, and $0$ is in the resolvent set of $A$, then the solution $\M$ to \cref{eq:forwarding-func} is explicitly and uniquely determined as $\M = - CA^{-1}$. This choice corresponds to the linear forwarding approach followed by \cite{marx:hal-02944073,terrand2019adding,marx:hal-03417238}.
\end{rem}

\subsection{The case of unbounded output}  
\label{sec:C-unbounded}
We conclude \cref{sec:main-res} with an informal discussion regarding the  convergence of the output $Cw(t)$ towards the reference when the operator $C$ is unbounded. 
To get some insight on the situation, assume for a moment that the original $w$-system is linear. Then, the solution $\M$ to \cref{eq:forwarding-func} provided by \cref{th:M-semilinear} is a bounded linear operator, and the closed-loop dynamics around the equilibrium are governed by a strongly continuous linear semigroup, which commutes with its generator. Recalling the notation from \cref{sec:semilinear}, it follows that for a \emph{strong} solution $[w, z]$ to \cref{eq:control-system}-\cref{eq:feedback-law}, $w(t) - w^\star$ goes to $0$ in $\D(A)$ endowed the graph norm, hence $Cw(t)$ converges to $\yr$ provided that $C$ is $A$-bounded. In the nonlinear case, the argument breaks down.

Alternatively, one may look for weaker notions of convergence. In many applications, unbounded output operators of interest enjoy an \emph{admissibility} property with respect to the uncontrolled dynamics. In the linear theory (see, e.g., \cite[Section 4.3]{tucsnak2009observation}), $C$ is said to be $A$-admissible if $C$ is $A$-bounded and there exist positive constants $K$ and $T$ such that
\begin{equation}
\label{eq:A-admis}
\int_0^T \|C\mathcal{S}_t w_0\|^2_Z \, \d t \leq K \|w_0\|^2_H, \quad \forall w_0 \in \D(A).
\end{equation}
In the semilinear case, one can deduce from \cref{eq:A-admis} that,
first of all, the output $Cw$ is well-defined in $L^2_{\mathrm{loc}}(0, +\infty; Z)$ even for weak solutions $[w, z]$ to the closed-loop equations \cref{eq:control-system}-\cref{eq:feedback-law}, and secondly, that the output converges ``in average" to the reference:
\begin{equation}
\lim_{\tau \to + \infty} \int_\tau^{T + \tau} \|Cw(t) - \yr\|^2_Z \, \d t = 0
\end{equation}
for any (including weak) solution $[w, z]$ that converges to the equilibrium $[w^\star, z^\star]$ in $H \times Z$. When no semilinear structure is prescribed for the maximal monotone operator $\A$ governing the $w$-dynamics, we  generalize  \cref{eq:A-admis} by assuming  that for any $w_i^{0}$ in  $\D(\A)$ and $f_i$ 
absolutely continuous with derivative in $L^2(0, T; H)$
, the
solution\footnote{Existence and uniqueness of a strong solution is guaranteed by \cite[Chapter 4, Theorem 4.1]{showalter_monotone_2013}. In particular, each $w_i$ is absolutely continuous; hence $\A(w_i)$ is measurable, and so is $Cw_i$ by \cref{eq:forwarding-func}.
}
$w_i$ to $\d w_i / \d t + \A(w) = f_i$ with initial condition $w_i(0) = w_i^0$, $i \in \{1, 2\}$, satisfies
\begin{equation}
\label{eq:incr-admis}
\int_0^T \|Cw_1(t) - Cw_2(t) \|^2_Z \, \d t \leq K \|w_1(0) - w_2(0)\|^2_H + K \int_0^T \|f_1 - f_2\|^2_H \, \d t.
\end{equation}
In that case, 
the same conclusions hold for closed-loop solutions.

\section{Proofs of the main results}\label{sec:proof}
This section is devoted to the proofs of the results of the paper.
In 
what follows, we will investigate well-posedness, global attractivity \eqref{eq:glob-conv} and local exponential stability \eqref{eq:exp-conv} of the closed-loop system \cref{eq:control-system}-\cref{eq:feedback-law}  in the new coordinates $[w, \eta]$ where $\eta$ is given by
\begin{equation}
\label{eq:change-variable}
\eta \triangleq z - \M(w).
\end{equation}
In $[w, \eta]$-coordinates, \cref{eq:control-system}-\cref{eq:feedback-law} may be equivalently rewritten as
\begin{subequations}
\label{eq:cl-new-var}
\begin{align}
\label{eq:cl-new-var-w}
&\dt{w} + \A(w) = BB^*\d\M(w)^*\eta + d, \\
\label{eq:cl-new-var-eta}
&\dt{\eta} + \d\M(w)BB^*\d\M(w)^*\eta = - y_\mathrm{ref} - \d\M(w)d.
\end{align}
\end{subequations}
\Cref{eq:cl-new-var-eta}  is obtained by differentiating \cref{eq:change-variable} and using \cref{eq:forwarding-func} combined with \cref{eq:cl-new-var-w}.
Since $\M$ is continuous, \eqref{eq:change-variable} preserves the topology of the space: given $[w^\star, z^\star]\in H\times Z$, any solution $[w, z]$ of \cref{eq:control-system}-\cref{eq:feedback-law} converges towards some $[w^\star, z^\star]\in H\times Z$ if and only if $[w, \eta]$ converges towards $[w^\star, \eta^\star]$, where $\eta=z-\M(z)$ and $\eta^\star = z^\star-\M(w^\star)$.
Moreover, given a ball of $H$ containing $w^\star$, by local Lipschtz continuity of $\M$, there exist positive constants $K_1$ and $K_2$ such that
\begin{equation}
\label{eq:equiv-z-eta}
K_1 \| [w - w^\star, z - z^\star]\|_{H\times Z} \leq  \| [w - w^\star, \eta - \eta^\star]\|_{H\times Z} \leq K_2  \| [w - w^\star, z - z^\star]\|_{H\times Z}
\end{equation}
whenever $w$ lies in that ball. 
Therefore, well-posedness, global attractivity \eqref{eq:glob-conv}, and local exponential stability \eqref{eq:exp-conv}, of \cref{eq:control-system}-\cref{eq:feedback-law} and \eqref{eq:cl-new-var} are actually equivalent.
Finally, to alleviate notation when needed, we define
\begin{equation}
\K(w) \triangleq \d\M(w) B \in \mathcal{L}(U, Z), \quad \forall w \in H.
\end{equation}

\subsection{Proof of \cref{th:well-posed} (well-posedness)}
\label{sec:proof-wp}
We start by proving that solutions to \cref{eq:cl-new-var} exist at least on a finite time interval (\cref{wpi,wpii} of \cref{th:well-posed}), and then we investigate forward completeness (\cref{wpiii} of \cref{th:well-posed}).

\textbf{Step 1: Local well-posedness.} First, we observe that \cref{eq:cl-new-var} represents a locally Lipschitz perturbation of the following maximal monotone problem:
\begin{equation}
\dt{} [w, \eta] + [\A(w), \eta] = 0.
\end{equation}
Indeed, by letting
\begin{equation}
\label{eq:def-F}
 \mathcal{F}_{d, \yr}[w, \eta] \triangleq \begin{bmatrix} - B\K(w)^*\eta - d \\ - \eta +\K(w)\K(w)^*\eta + y_\mathrm{ref} + \d\M(w)d \end{bmatrix},
 \quad \forall [w, \eta] \in H \times Z,
\end{equation}
we can rewrite \cref{eq:cl-new-var} as follows:
\begin{equation}
\label{eq:cl-cauchy-pb}
\dt{}[w, \eta] + [\A(w), \eta] + \mathcal{F}_{d, \yr}[w, \eta] = 0.
\end{equation}
The nonlinear map $\mathcal{F}_{d, \yr}$ is locally Lipschitz continuous on $H \times Z$. Besides, since $\A$ is maximal monotone on $H$, so is the mapping $[w, \eta] \mapsto [\A(w), \eta]$ on $H \times Z$ (with dense domain $\D(\A) \times Z$). Furthermore, $[\A(0), 0] = 0$. Thus, it follows from \cite[Theorem 7.2]{chueshov_attractor_2002} that for each initial condition $[w_0, \eta_0] \in H \times Z$, there exists a unique maximal weak solution $[w, \eta]$ to \cref{eq:cl-new-var} defined on $[0, T_\mathrm{max})$, with $T_\mathrm{max} \in (0, +\infty]$.
Moreover, if $w_0\in\D(\A)$, $[w, \eta]$ is actually a strong solution.
Recalling that $z = \eta + \M(w)$ and $\M\in \mathcal{C}^1(H, Z)$, we obtain \cref{wpi,wpii}.
Finally, if $T_\mathrm{max}$ is finite, then the norm of $[w(t), \eta(t)]$ must go to $ + \infty$ as $t$ approaches $T_\mathrm{max}$. 

\textbf{Step 2: Sufficient conditions for forward completeness.} Let us prove that if  take $d = 0$ or  $\d \M$ is globally Lipschitz continuous, then $T_\mathrm{\max} = + \infty$ for any initial condition $[w_0, \eta_0] \in H \times Z$. Let $\rho > 0$ to be fixed (large enough) later on. Given a (strong) solution $[w, \eta]$ with initial data $[w_0, \eta_0] \in \D(\A) \times Z$, we have
\begin{multline}
\label{eq:energy-id}
\frac{1}{2}\dt{} \left \{  \|w\|^2_H + \rho \|\eta\|^2_Z \right \} = -(\A(w), w)_H + (B\K(w)^*\eta, w)_H + (d, w)_H  \\ - \rho \|\K(w)^* \eta \|^2_U - \rho(y_\mathrm{ref}, \eta)_Z - \rho (\d\M(w) d, \eta)_Z
\end{multline}
holding a.e. on $(0, T_\mathrm{max})$. Similarly as in our preliminary Lyapunov analysis \cref{eq:pre-lyap}-\cref{eq:lyap}, in order to deal with the term $B\K(w)^*\eta$, we use \cref{eq:nonlin-coer} together with Young's inequality and choose $2\rho \geq \|B\|^2_{\mathcal{L}(U, H)}$ to obtain
\begin{multline}
\label{eq:est-dMd}
\dt{} \left \{  \|w\|^2_H + \rho \|\eta\|^2_Z \right \}  \leq   2\|w\|^2_H  + 2\rho \|\eta \|^2_Z + {\rho} \|y_\mathrm{ref}\|^2_Z +  \| d\|^2_H  + {\rho}\|\d\M(w)d\|^2_Z
\end{multline}
If $d = 0$ or $\d\M$ is globally Lipschitz continuous, the following inequality holds:
\begin{equation}
\label{eq:dMd}
\| \d \M(w)d \|^2_Z \leq K(1 + \|w\|^2_H) \|d\|^2_H
\end{equation}
for some $K > 0$ independent of $w$ or $d$. Combining \cref{eq:est-dMd} and \cref{eq:dMd} yields
\begin{equation}
\label{eq:est-boom-ae}
\dt{} \left \{  \|w\|^2_H + \rho \|\eta\|^2_Z \right \} \leq K' \left \{ \|w\|^2_H + \rho \|\eta \|^2_Z \|\right \} + K' \quad \mbox{a.e.}
\end{equation}
for some $K'> 0$ independent of the initial data. As a strong solution to \cref{eq:cl-new-var},  $[w, \eta]$ is absolutely continuous in $H \times Z$; therefore,  we can deduce from \cref{eq:est-boom-ae} and Gr\"onwall's inequality the following uniform estimate:  for all $t \in [0, T_\mathrm{max})$,
\begin{equation}
\label{eq:no-boom}
\|w(t)\|^2_H + \rho \|\eta(t)\|^2_Z \leq \exp(K't)   \{  \|w_0\|^2_H + \rho \|\eta_0\|^2_Z +1 \}.
\end{equation}
We infer from \cref{eq:no-boom} that the norm of $[w, \eta]$ cannot blow up in finite time; thus, $T_\mathrm{max} = + \infty$. Furthermore, by passing to the limit, we see that \cref{eq:no-boom} is satisfied for weak solutions as well, which means that the same conclusion holds for any initial data in $H \times Z$.
This proves \cref{wpiii} and ends the proof of \cref{th:well-posed}.

\subsection{Proof of \cref{th:global-stab} (output regulation)}
\label{sec:proof-stab} Our main result concerning the output regulation problem is demonstrated here, along with some auxiliary results.

\subsubsection{Outline of the proof and intermediate results}
The proof of \cref{th:global-stab} relies on a series of  lemmas that are given below. Let us first give some insight on the main strategy.
\begin{enumerate}
\item Under the range condition \cref{eq:range-cond}, the operators $\mathcal{K}(w)^* = B^*\d \M(w)^*$ involved in the $\eta$-equation \cref{eq:cl-new-var-eta}  enjoy a  coercivity property that is uniform with respect to $w$, provided that $w$ is small. This is shown in \cref{lem:local-coercivity}.
\item This allows to prove that there exists a (not necessarily invariant) region around the origin of $H \times Z$ where the dynamics generated by \cref{eq:cl-new-var} are strictly contractive. This is stated in \cref{lem:local-contraction}.
\item Another consequence of the local coercivity property is the existence of suitable attracting sets for the closed-loop dynamics \cref{eq:cl-new-var}, as demonstrated in \cref{lem:attraction}.  Furthermore, if the reference $\yr$ and the perturbation $d$ are sufficiently small, then those sets are contained in the contraction region of the previous step. 
\item Finally, existence of a locally exponentially stable equilibrium for \cref{eq:cl-new-var} is shown by using Banach fixed point arguments and properties of $\omega$-limit sets associated with contraction semigroups.
\item Additionally, under the global coercivity condition \cref{eq:unif-coercive}, the attracting sets from \cref{lem:attraction} are in fact globally attractive, hence the equilibrium is also globally asymptotically stable.
\end{enumerate}
Coming back to the $[w, z]$-coordinates and thus to the original closed-loop system \cref{eq:control-system}-\cref{eq:feedback-law} is then straightforward.
In what follows, we shall be careful regarding the dependence of the various neighborhoods involved in the analysis with respect to each parameter. 
Before proceeding further, we introduce some additional notation.  Recalling \cref{eq:def-F} and \cref{eq:cl-cauchy-pb},
 given $[d, \yr]$, we denote by $\Acl$ the nonlinear operator $[\A, \mathrm{id}] + \mathcal{F}_{d, \yr}$ defined on $\D(\A) \times Z$ and associated with  the closed-loop system \cref{eq:cl-new-var} in the $[w, \eta]$-coordinates. Also, for $\rho > 0$, we denote by $\|\cdot \|_{H\times Z, \rho}$ the Hilbert norm on $H \times Z$ given by
\begin{equation}
\label{eq:def-rho-norm}
\|[w, \eta]\|^2_{H \times Z, \rho} \triangleq \|w\|^2_H + \rho \|\eta\|^2_Z, \quad \forall [w, \eta] \in H \times Z.
\end{equation}
All these norms are equivalent. 
We denote by $(\cdot, \cdot)_{H\times Z, \rho}$ the associated scalar product and by $\mathcal{B}_{H\times Z, \rho}([w, \eta], r)$ the open ball of radius $r > 0$ centered at $[w, \eta] \in H \times Z$
with respect to the norm $\|\cdot\|_{H \times Z, \rho}$.
Bearing in mind our Lyapunov analysis in the $[w, z]$-coordinates in \cref{sec:forwarding}, the $\rho$-norm is connected to the Lyapunov function $V$ by $V(w, z) = (1/2)\|[w, z - \M(w)\|^2_{H\times Z, \rho}$.
In the sequel, 
we either take $d = 0$ or assume that $\d\M$ is globally Lipschitz continuous.  Therefore, by virtue of \cref{th:well-posed}, solutions to the closed-loop equations \cref{eq:cl-new-var} are well-defined for all positive time and any initial condition in $H \times Z$.
Recall from \cref{eq:range-cond} in the statement of \cref{th:global-stab} that the following coercivity assumption is in force:
\begin{equation}
\label{eq:coercivity}
\|\K(0)^* z \|^2_U \geq \lambda \|z \|^2_Z, \quad \forall z \in Z.
\end{equation}
The first lemma is a consequence of \cref{eq:coercivity} and continuity of $\d \M$.
\begin{lemma}[Local coercivity]
\label{lem:local-coercivity}
There exist positive constants $\tilde{\lambda}$ and $\tilde{r}$ such that
\begin{equation}
\label{eq:local-coercivity}
\|\K(w)^* z \|^2_U \geq \tilde{\lambda} \|z \|^2_Z, \quad \forall z \in Z, \ \forall w \in \mathcal{B}_H(0, \tilde{r}).
\end{equation}
\end{lemma}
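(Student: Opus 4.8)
The plan is to leverage continuity of $\d\M$ at the origin to transfer the coercivity of $\K(0)^* = B^*\d\M(0)^*$ to a small ball. First I would write, for arbitrary $w \in H$ and $z \in Z$,
\[
\|\K(w)^*z\|_U = \|B^*\d\M(w)^*z\|_U \geq \|B^*\d\M(0)^*z\|_U - \|B^*(\d\M(w)^* - \d\M(0)^*)z\|_U.
\]
By \cref{eq:coercivity}, the first term is at least $\sqrt{\lambda}\,\|z\|_Z$. For the second term, I would bound
\[
\|B^*(\d\M(w)^* - \d\M(0)^*)z\|_U \leq \|B\|_{\mathcal{L}(U,H)}\,\|\d\M(w) - \d\M(0)\|_{\mathcal{L}(H,Z)}\,\|z\|_Z,
\]
using $\|B^*\|_{\mathcal{L}(H,U)} = \|B\|_{\mathcal{L}(U,H)}$ and the fact that the operator norm of an adjoint equals that of the operator.

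Next, since $\d\M \in \mathcal{C}(H,\mathcal{L}(H,Z))$ (part of \cref{hyp:forwarding-func}; indeed $\d\M$ is even locally Lipschitz in the hypotheses of \cref{th:global-stab}), there exists $\tilde r > 0$ such that
\[
\|\d\M(w) - \d\M(0)\|_{\mathcal{L}(H,Z)} \leq \frac{\sqrt{\lambda}}{2(1 + \|B\|_{\mathcal{L}(U,H)})}, \quad \forall w \in \mathcal{B}_H(0, \tilde r).
\]
Combining the two displays, for every $w \in \mathcal{B}_H(0,\tilde r)$ and every $z \in Z$,
\[
\|\K(w)^*z\|_U \geq \sqrt{\lambda}\,\|z\|_Z - \frac{\sqrt{\lambda}}{2}\,\|z\|_Z = \frac{\sqrt{\lambda}}{2}\,\|z\|_Z,
\]
so squaring gives \cref{eq:local-coercivity} with $\tilde\lambda \triangleq \lambda/4$. (If one prefers to avoid the square root, the same conclusion follows by noting $\|\K(w)^*z\|_U^2 \geq \tfrac12\|\K(0)^*z\|_U^2 - \|B\|^2\|\d\M(w)-\d\M(0)\|^2\|z\|_Z^2$ and choosing $\tilde r$ so the last factor is below $\lambda/2$; this yields $\tilde\lambda = \lambda/4$ as well.)

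There is essentially no obstacle here: the only point requiring the hypotheses of \cref{th:global-stab} rather than bare continuity is cosmetic, and the argument is a direct perturbation estimate. The mild care needed is to keep track that $\d\M$ takes values in $\mathcal{L}(H,Z)$ and that $B$ is bounded, so that the product of the three operator norms controls the defect; both are guaranteed by the standing assumptions on \cref{eq:open-loop} and by \cref{hyp:forwarding-func}. This lemma then feeds directly into the local contraction and attraction lemmas announced in the outline of the proof of \cref{th:global-stab}.
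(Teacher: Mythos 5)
Your proposal is correct and follows essentially the same route as the paper: a perturbation estimate transferring the coercivity of $\K(0)^*$ to a small ball via continuity of $\d\M$ at the origin (the paper works directly with squared norms via Young's inequality and obtains $\tilde\lambda = \lambda/3$, but this is only a cosmetic difference from your $\lambda/4$).
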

\begin{proof}
 Let $w \in H$ and $z \in Z$. Recall that
$\K(w)^*z = B^*\d\M(0)^*z + B^*[\d\M(w) - \d\M(0)]^*z.
$
Using Cauchy-Schwarz and Young inequalities together with \cref{eq:coercivity}, we get
\begin{equation}
\|\K(w)^*z\|_U^2  \geq \frac{2\lambda}{3} \|z\|^2_H - 2\|B\|^2_{\mathcal{L}(U, H)}  \|[\d\M(w) - \d\M(0)]^* \|^2_{\mathcal{L}(Z, U)}\|z\|^2_Z.
\end{equation}
By continuity of $\d\M$ at $0$, we can choose $\tilde{r} > 0$ such that
\begin{equation}
\| \d \M(w) - \d \M(0) \|_{\mathcal{L}(H, Z)}^2 \leq  \ \frac{\lambda}{6 \|B\|^{2}_{\mathcal{L}(U, H)}}, \quad  \forall w \in \mathcal{B}_H(0, \tilde{r}).
\end{equation}
Thus, by letting $\tilde{\lambda} \triangleq \lambda / 3 > 0$, we obtain the desired inequality \cref{eq:local-coercivity}.
\end{proof}
The following constant appears in the next two lemmas:
\begin{equation}
\label{eq:kappa}
\kappa \triangleq \min \{ \alpha/4, \tilde{\lambda}/4\}.
\end{equation}
Those concern the contraction property of the dynamics governed by \cref{eq:cl-new-var} around the origin and the existence of attractive sets depending on $[d, \yr]$.

\begin{lemma}[Local strong monotonicity] 
\label{lem:local-contraction}
There exists $\rho_0 > 0$ such that the following property holds: for all $\rho \geq \rho_0$, there exists a positive $r_{0, \rho} \leq \tilde{r}$  such that
\begin{multline}
\label{eq:local-cont}
(\Acl[w_1, \eta_1] - \Acl[w_2, \eta_2], [w_1, \eta_1] - [w_2, \eta_2])_{H\times Z, \rho}  \\ \geq  \kappa \|[w_1, \eta_1] - [w_2, \eta_2]\|^2_{H \times Z, \rho}
\end{multline}
for all $[w_1, \eta_1], [w_2, \eta_2] \in \mathcal{B}_{H\times Z}(0, r_{0, \rho}) \cap \D(\A) \times Z$, $d \in \mathcal{B}_H(0, r_{0, \rho})$ and $\yr \in Z$.
\end{lemma}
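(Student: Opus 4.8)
The plan is to expand the bilinear form on the left-hand side of \cref{eq:local-cont} into three contributions and estimate each one. Writing $[w_i,\eta_i]$ for the two points, the difference $\Acl[w_1,\eta_1]-\Acl[w_2,\eta_2]$ splits as the monotone part $[\A(w_1)-\A(w_2),\eta_1-\eta_2]$ plus the difference of the $\mathcal F_{d,\yr}$ terms. Pairing with $[w_1-w_2,\eta_1-\eta_2]$ in the $\rho$-inner product, the monotone part contributes at least $\alpha\|w_1-w_2\|_H^2+\rho\|\eta_1-\eta_2\|_Z^2$ by \cref{hyp:nonlin-coer} (note the $d$- and $\yr$-constant terms in $\mathcal F_{d,\yr}$ cancel in the difference and so never appear). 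What remains are the coupling term $-B\K(\cdot)^*\eta$ on the $w$-line, the term $+\K(\cdot)\K(\cdot)^*\eta$ on the $\eta$-line, and the $d$-dependent term $\d\M(\cdot)d$; these are the ones that must be controlled.

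First I would isolate the ``good'' dissipative term: on the $\eta$-line, the quadratic contribution of $\K(w_1)\K(w_1)^*\eta_1-\K(w_2)\K(w_2)^*\eta_2$ paired with $\eta_1-\eta_2$ is, after adding and subtracting $\K(w_2)\K(w_2)^*\eta_1$ (or symmetrizing), bounded below by the coercive term $\tilde\lambda\|\eta_1-\eta_2\|_Z^2$ coming from \cref{lem:local-coercivity} — this is why we restrict to $\mathcal B_H(0,\tilde r)$ — minus a cross term involving $(\K(w_1)\K(w_1)^*-\K(w_2)\K(w_2)^*)\eta$, which is $O(\|w_1-w_2\|_H\,\|\eta\|_Z\,\|\eta_1-\eta_2\|_Z)$ by local Lipschitz continuity of $\d\M$ and boundedness of $B$. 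Second, the off-diagonal coupling $(B\K(w_1)^*\eta_1-B\K(w_2)^*\eta_2,w_1-w_2)_H$ is again split into a term linear in $\eta_1-\eta_2$ (absorbable into $\alpha\|w_1-w_2\|_H^2$ and $\rho\tilde\lambda\|\eta_1-\eta_2\|_Z^2$ via Young, since $B$ and $\K$ are bounded on the ball) and a term $(B[\K(w_1)^*-\K(w_2)^*]\eta_2,w_1-w_2)_H$ which is $O(\|\eta_2\|_Z\|w_1-w_2\|_H^2)$ by the Lipschitz bound on $\d\M$. Third, the $d$-term $\rho(\d\M(w_1)d-\d\M(w_2)d,\eta_1-\eta_2)_Z$ is $O(\rho\|d\|_H\|w_1-w_2\|_H\|\eta_1-\eta_2\|_Z)$, again by local Lipschitz continuity of $\d\M$.

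Now I would collect: all the ``bad'' cross terms carry a factor that is small provided $\|w_i\|_H$, $\|\eta_i\|_Z$ and $\|d\|_H$ are small, i.e. provided $r_{0,\rho}$ is chosen small. So the strategy is to fix $\rho_0$ first so that $2\rho_0\geq\|B\|^2_{\mathcal L(U,H)}$ (as in the well-posedness proof, needed so the $\eta$-coercivity survives with a $\rho$-weight) and so that the linear-in-$\eta_1-\eta_2$ part of the off-diagonal coupling can be absorbed into $\frac{\alpha}{2}\|w_1-w_2\|_H^2+\frac{\rho\tilde\lambda}{2}\|\eta_1-\eta_2\|_Z^2$, leaving a comfortable surplus of $\frac{\alpha}{2}\|w_1-w_2\|_H^2+\frac{\rho\tilde\lambda}{2}\|\eta_1-\eta_2\|_Z^2$; then, for each $\rho\geq\rho_0$, choose $r_{0,\rho}\leq\tilde r$ small enough that the remaining cross terms (each of which is $r_{0,\rho}$ times a bounded constant times $\|[w_1,\eta_1]-[w_2,\eta_2]\|^2_{H\times Z,\rho}$, after one more Young inequality to handle the $\rho$-weighting) are at most $\frac{1}{4}$ of that surplus. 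This yields a lower bound $\frac{\alpha}{4}\|w_1-w_2\|_H^2+\frac{\rho\tilde\lambda}{4}\|\eta_1-\eta_2\|_Z^2\geq\kappa\|[w_1,\eta_1]-[w_2,\eta_2]\|^2_{H\times Z,\rho}$ with $\kappa=\min\{\alpha/4,\tilde\lambda/4\}$, as defined in \cref{eq:kappa}. The main obstacle is purely bookkeeping: making sure the thresholds are chosen in the right order — $\rho_0$ depends only on $\alpha$, $\tilde\lambda$, $\|B\|$, while $r_{0,\rho}$ is allowed to depend on $\rho$ — and that the $\rho$-weighted Young inequalities are applied so that no term that one wants to absorb carries an uncontrolled power of $\rho$. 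One subtlety worth flagging is that $\K(w_i)$ and $\d\M(w_i)$ must be uniformly bounded on $\mathcal B_H(0,\tilde r)$; this follows from continuity of $\d\M$ (hence boundedness on the closed ball, after possibly shrinking $\tilde r$) together with boundedness of $B$, so no extra hypothesis is needed.
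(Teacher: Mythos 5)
Your proposal is correct and follows essentially the same route as the paper's proof: the same decomposition of $(\Acl[w_1,\eta_1]-\Acl[w_2,\eta_2],[\tilde w,\tilde\eta])_{H\times Z,\rho}$ into the monotone part, the coercive term $\rho\|\K(w_1)^*\tilde\eta\|_U^2\geq\rho\tilde\lambda\|\tilde\eta\|_Z^2$ from \cref{lem:local-coercivity}, and cross terms controlled by local Lipschitz continuity of $\d\M$, with $\rho_0$ fixed first to absorb the $(B\K(w_1)^*\tilde\eta,\tilde w)_H$ coupling and $r_{0,\rho}$ then shrunk (depending on $\rho$) to absorb the remaining $\eta_2$- and $d$-dependent terms. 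One minor bookkeeping slip: you omit the $-\eta$ component of $\mathcal{F}_{d,\yr}$, whose pairing exactly cancels the $\rho\|\eta_1-\eta_2\|_Z^2$ you credit to the identity part of $[\A,\mathrm{id}]$; since your absorption argument never relies on that spurious positive term, the conclusion is unaffected.
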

\begin{proof}
Let $[w_1, \eta_2]$ and $[w_2, \eta_2]$ in $\mathcal{D}(\mathcal{A}) \times Z$. We write $\tilde{w} \triangleq w_1 - w_2$ and $\tilde{\eta} \triangleq \eta_1 - \eta_2$. Then, for any $\rho > 0$,
\begin{multline}
\label{eq:scalar-prod-cl}
(\Acl[w_1, \eta_1] - \Acl[w_2, \eta_2], [\tilde{w}, \tilde{\eta}])_{H\times Z, \rho} 
 \\ = (\A(w_1) - \A(w_2) - B[\K(w_1)^*\eta_1 - \K(w_2)^* \eta_2], \tilde{w})_H \\
+ \rho ( \K(w_1)\K(w_1)^*\eta_1 - \K(w_2)\K(w_2)^*\eta_2 + [\d\M(w_1)  - \d\M(w_2)]d, \tilde{\eta})_Z.
\end{multline}
By adding and removing some terms, \cref{eq:scalar-prod-cl} can be rewritten as follows:
\begin{multline}
\label{eq:Acl-dec}
(\Acl[w_1, \eta_1] - \Acl[w_2, \eta_2], [\tilde{w}, \tilde{\eta}])_{H\times Z, \rho} 
= (\A(w_1) - \A(w_2), \tilde{w})_H \\
+ \rho (\K(w_1)\K(w_1)^*\tilde{\eta}, \tilde{\eta})_Z   + \rho([\K(w_1)\K(w_1)^* 
- \K(w_2)\K(w_2)^*]\eta_2, \tilde{\eta})_Z \\
- (B\K(w_1)^*\tilde{\eta}, \tilde{w})_H 
- (B[\K(w_1) - \K(w_2)]^*\eta_2, \tilde{w})_H  + \rho ([\d\M(w_1) - \d\M(w_2)]d, \tilde{\eta})_Z.
\end{multline}
Assume for a moment that $\|w_i\|_H \leq \tilde{r}/2$,  where $\tilde{r}$ is given by \cref{lem:local-coercivity}.
Therefore, $\|\tilde{w}\|_H \leq \tilde{r}$; and using \cref{eq:nonlin-coer} and \cref{eq:local-coercivity} together with the Lipschtz continuity of $\K$ and $\K(\cdot)\K(\cdot)^*$ on $\mathcal{B}_H(0, \tilde{r})$, we infer from \cref{eq:Acl-dec} that
\begin{multline}
(\Acl[w_1, \eta_1] - \Acl[w_2, \eta_2], [\tilde{w}, \tilde{\eta}])_{H\times Z, \rho} \geq \alpha \|\tilde{w}\|^2_H +  \rho \tilde{\lambda} \|\tilde{\eta}\|^2_Z  \\
- \rho K_1 \|\tilde{w}\|_H \|\eta_2\|_Z \|\tilde{\eta}\|_Z - K_2 \|\tilde{\eta}\|_Z \|\tilde{w}\|_H  
- K_3 \|\eta_2\|_Z \|\tilde{w}\|^2_H -  \rho K_4 \|\tilde{w}\|_H \|d\|_H \|\tilde{\eta}\|_Z.
\end{multline} 
 where the $K_i$ are some positive constants independent of
$[w_i, \eta_i]$, $[d, \yr]$ and $\rho$.
Given $\eps > 0$, we employ Cauchy-Schwarz and Young inequalities to obtain
\begin{multline}
\label{eq:cont-young}
 (\Acl[w_1, \eta_1] - \Acl[w_2, \eta_2], [\tilde{w}, \tilde{\eta}])_{H\times Z, \rho} \geq   \alpha \|\tilde{w}\|^2_H +  \rho \tilde{\lambda} 
\|\tilde{\eta}\|^2_Z \\
- \frac{\rho}{2} \{ K_1 \|\eta_2\|_Z + K_4 \|d\|_H \}\{\|\tilde{w}\|^2_H + \|\tilde{\eta}\|^2_Z\} - \frac{K_2}{2\eps} \|\tilde{\eta}\|^2_Z - \frac{\eps K_2}{2} \|\tilde{w}\|^2_H - K_3 \|\eta_2\|_Z \|\tilde{w}\|^2_H.
\end{multline}
Let $\eps = \alpha/(2 K_2)$ in \cref{eq:cont-young} and define $\rho_0 \triangleq 2K_2/(\tilde{\lambda}\eps)$. For all $\rho \geq \rho_0$,
\begin{multline}
\label{eq:cont-young-bis}
(\Acl[w_1, \eta_1] - \Acl[w_2, \eta_2], [\tilde{w}, \tilde{\eta}])_{H\times Z, \rho} \geq \frac{\alpha}{2} \|\tilde{w}\|^2_H + \frac{\rho \tilde{\lambda}}{2} \|\tilde{\eta}\|^2_Z \\ - \frac{\rho}{2} \{ K_1 \|\eta_2\|_Z + K_4 \|d\|_H \}\{\|\tilde{w}\|^2_H + \|\tilde{\eta}\|^2_Z\} - K_3 \|\eta_2\|_Z \|\tilde{w}\|^2_H.
\end{multline}
Let $\rho \geq \rho_0$. We infer from \cref{eq:cont-young-bis} that there exists $r_{0, \rho} \leq \tilde{r}/2$ such that
\begin{equation}
\label{eq:local-contr}
(\Acl[w_1, \eta_1] - \Acl[w_2, \eta_2], [\tilde{w}, \tilde{\eta}])_{H\times Z, \rho} \geq   \frac{\alpha}{4} \|\tilde{w}\|^2_H +  \frac{\rho \tilde{\lambda}}{4} \|\tilde{\eta}\|^2_Z
\end{equation}
as long as $[w_i, \eta_i] \in \mathcal{B}_{H\times Z}(0, r_{0, \rho})$ and $d \in \mathcal{B}_H(0, r_{0, \rho})$, which completes the proof. 
\end{proof}

\begin{lemma}[Absorbing balls]
\label{lem:attraction}
 There exists  $\rho_1 > 0$ such that the following property holds:
for any $\rho \geq \rho_1$, there exist positive   $r_{1,\rho}$ and $K_\rho$ such that, if $[d, \yr] \in \mathcal{B}_{H \times Z}(0, r_{1, \rho})$, then the estimate
\begin{multline}
\label{eq:attraction}
\|[w(t), \eta(t)]\|^2_{H \times Z, \rho}  \\ \leq  \exp(- \kappa t) \|[w_0, \eta_0]\|^2_{H \times Z, \rho}  + K_\rho \{1 - \exp(-\kappa t)\} \|[d, \yr]\|^2_{H\times Z}
\end{multline}
holds for any solution $[w, \eta]$ to \cref{eq:cl-new-var} with initial data $[w_0, \eta_0]$ in $\mathcal{B}_{H \times Z}(0, r_{1, \rho})$.
Furthermore, if \cref{eq:local-coercivity} holds globally,  then \cref{eq:attraction} is true for all initial data in $H \times Z$.
\end{lemma}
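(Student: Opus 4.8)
The plan is to run an energy estimate on the $\rho$-norm of the solution to \cref{eq:cl-new-var}, using the strong monotonicity property from \cref{lem:local-contraction} but now comparing the solution to the stationary point $[0,0]$ rather than to another trajectory, so that the inhomogeneous terms $d$ and $\yr$ (which previously cancelled in the incremental estimate) reappear as forcing. Concretely, for a strong solution with $[w_0,\eta_0]\in\D(\A)\times Z$ I would compute, a.e. on the existence interval,
\begin{equation}
\frac{1}{2}\dt{}\|[w,\eta]\|^2_{H\times Z,\rho} = -(\Acl[w,\eta] - \Acl[0,0],[w,\eta])_{H\times Z,\rho} + (\mathcal{G}_{d,\yr},[w,\eta])_{H\times Z,\rho},
\end{equation}
where $\Acl[0,0] = \mathcal{F}_{d,\yr}[0,0] = [-d,\;\yr+\d\M(0)d]$ collects exactly the affine terms, so $\mathcal{G}_{d,\yr}$ is a fixed element of $H\times Z$ whose norm is bounded by $C_\rho\|[d,\yr]\|_{H\times Z}$ (using that $\d\M(0)$ is bounded).

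Next I would invoke \cref{lem:local-contraction}: taking $\rho\geq\rho_0$ and restricting to the ball $\mathcal{B}_{H\times Z}(0,r_{0,\rho})$ with $d\in\mathcal{B}_H(0,r_{0,\rho})$, the monotonicity term is bounded below by $\kappa\|[w,\eta]\|^2_{H\times Z,\rho}$ (here $\kappa$ is the constant from \cref{eq:kappa}, noting $\kappa\leq\alpha/4$ and $\kappa\leq\tilde\lambda/4$ so the bound in \cref{eq:local-contr} dominates $\kappa\|\cdot\|^2_{H\times Z,\rho}$ up to the equivalence of the $\rho$-norm with the weighted sum — one should be slightly careful and perhaps absorb a harmless factor, or simply redefine $\kappa$ to make this clean). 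For the forcing term I apply Young's inequality: $(\mathcal{G}_{d,\yr},[w,\eta])_{H\times Z,\rho}\leq \frac{\kappa}{2}\|[w,\eta]\|^2_{H\times Z,\rho} + \frac{1}{2\kappa}\|\mathcal{G}_{d,\yr}\|^2_{H\times Z,\rho}$. Combining, one gets the differential inequality $\dt{}\|[w,\eta]\|^2_{H\times Z,\rho}\leq -\kappa\|[w,\eta]\|^2_{H\times Z,\rho} + C'_\rho\|[d,\yr]\|^2_{H\times Z}$, valid as long as the solution stays in the ball. Grönwall then yields precisely \cref{eq:attraction} with $K_\rho = C'_\rho/\kappa$ — and one must check the book-keeping that makes the ``absorbing'' structure consistent: choose $\rho_1\geq\rho_0$ and then pick $r_{1,\rho}\leq r_{0,\rho}$ small enough (depending on $\rho$, $r_{0,\rho}$, $K_\rho$) so that if $[w_0,\eta_0]$ and $[d,\yr]$ both lie in $\mathcal{B}(0,r_{1,\rho})$, the right-hand side of \cref{eq:attraction} stays below $r_{0,\rho}^2$ for all $t$ (using that it is a convex combination type bound of two quantities both $<r_{0,\rho}^2$), so the trajectory never leaves $\mathcal{B}_{H\times Z}(0,r_{0,\rho})$ and the estimate propagates for all positive time. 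A continuity/bootstrap argument (the set of times where $\|[w,\eta]\|<r_{0,\rho}$ is open, nonempty, and closed in $[0,T_{\max})$) makes this rigorous. Finally I would extend from strong to weak solutions by density and continuous dependence, as done at the end of the proof of \cref{th:well-posed}.

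For the global statement: if \cref{eq:local-coercivity} holds globally (i.e. \cref{eq:unif-coercive} is in force), then in the proof of \cref{lem:local-contraction} the terms multiplied by $\|\eta_2\|_Z$ disappear when comparing to $[0,0]$ (since $\eta_2 = 0$), and the only obstruction to a global lower bound is the term $\frac{\rho}{2}K_4\|d\|_H\{\cdots\}$; choosing $\rho\geq\rho_1$ and $\|d\|_H$ small enough (i.e. $\|d\|_H\leq r_{1,\rho}$) makes this absorbable uniformly in $w$, so the strong monotonicity inequality \cref{eq:local-contr} holds on all of $\D(\A)\times Z$. The energy estimate then closes globally with no need for the confinement argument, giving \cref{eq:attraction} for arbitrary initial data.

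The main obstacle I anticipate is the confinement book-keeping in the local case: one needs the radius $r_{1,\rho}$ of the absorbing ball to be simultaneously (a) small enough that initial data in it, together with small $[d,\yr]$, keep the trajectory inside $\mathcal{B}(0,r_{0,\rho})$ where \cref{lem:local-contraction} applies, and (b) chosen in the right order relative to $\rho$, $\tilde r$, $K_\rho$ — and since $K_\rho$ itself depends on $\rho$, one must fix $\rho$ first (any $\rho\geq\rho_1\triangleq\rho_0$), then derive $K_\rho$, then shrink $r_{1,\rho}$. The bootstrap/continuity argument showing the trajectory cannot exit the ball before the estimate forbids it is the one genuinely delicate point; everything else is Young's inequality and Grönwall.
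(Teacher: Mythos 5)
Your proposal is correct in substance and arrives at the same differential inequality and Gr\"onwall conclusion, but it is packaged differently from the paper. The paper does not invoke \cref{lem:local-contraction} at all here: it returns to the energy identity \cref{eq:energy-id}, uses \cref{hyp:nonlin-coer} for the $w$-part and \cref{lem:local-coercivity} for the $\eta$-part (valid once a crude one-step growth bound confines $w$ to $\mathcal{B}_H(0,\tilde r)$), absorbs the cross term $(B\K(w)^*\eta,w)_H$ into the good term $-\rho\|\K(w)^*\eta\|^2_U$ by Young's inequality with $\rho$ large, and then propagates the resulting estimate by induction on unit time intervals. You instead obtain the dissipation by testing the monotonicity of $\Acl$ against the stationary point $[0,0]$, so that the affine part $\Acl[0,0]=[-d,\ \yr+\d\M(0)d]$ becomes the forcing, and you replace the unit-interval induction with a first-exit-time continuity argument. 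Both routes are valid for the local statement, and your ordering of the choices ($\rho$ first, then $K_\rho$, then $r_{1,\rho}$) is the right one.

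The one point where your detour through \cref{lem:local-contraction} needs repair is the global statement. You claim that, under global coercivity, the only obstruction to a global monotonicity bound against the origin is the term $\rho([\d\M(w)-\d\M(0)]d,\eta)_Z$. That is not quite true: in the proof of \cref{lem:local-contraction} the cross term $-(B\K(w_1)^*\tilde\eta,\tilde w)_H$ is estimated through a constant $K_2$ bounding $\|B\K(w_1)^*\|$ uniformly on $\mathcal{B}_H(0,\tilde r)$ only; when $\d\M$ is merely globally Lipschitz, $\|\K(w)\|_{\mathcal{L}(U,Z)}$ grows linearly in $\|w\|_H$ and no such uniform constant exists on all of $H$. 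The fix is to postpone the use of coercivity and absorb this cross term into $-\rho\|\K(w)^*\eta\|^2_U$ itself via Young's inequality (which costs only $\rho\geq\|B\|^2_{\mathcal{L}(U,H)}\max\{1,2\alpha^{-1}\}$ and is uniform in $w$), exactly as the paper's direct computation does; only afterwards is the remaining $-\rho\|\K(w)^*\eta\|^2_U$ converted into $-\rho\tilde\lambda\|\eta\|^2_Z$ using the (now global) coercivity. With that modification your global argument closes; as written, it has a gap.
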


\begin{proof} 
All formal computations performed below are justified by considering appropriate sequence of strong solutions as provided by \cref{th:well-posed} and then passing to the limit at the very end. First, let 
\begin{equation}\rho_1 \triangleq  \|B\|_{\mathcal{L}(U, H)}^2 \max \{1, 2\alpha^{-1}\}.
\end{equation}
Let $\rho \geq \rho_1$ be fixed and let $[w, \eta]$ be a solution to \cref{eq:cl-new-var} with initial condition $[w_0, \eta_0]$.
As in the proof of \cref{th:well-posed}, using \cref{eq:energy-id}  we obtain
\begin{equation}
\label{eq:ineq-diff-d-y}
\dt{}   \|[w, \eta]\|^2_{H\times Z, \rho} \leq k_\rho  \{  \|[w, \eta]\|^2_{H\times Z, \rho} + \|y_\mathrm{ref}\|^2_Z +  \|d\|^2_H  \}
\end{equation}
for some $k_\rho > 0$ independent of $[w_0, \eta_0]$ and $[d, y_\mathrm{ref}]$. It follows from \cref{eq:ineq-diff-d-y} that
\begin{equation}
\label{eq:est-01}
\|[w(t), \eta(t)]\|^2_{H\times Z, \rho} \leq  \exp(k_\rho)  \|[w_0, \eta_0]\|^2_{H\times Z, \rho} +  \exp(k_\rho)\{ \|\yr\|^2_Z + \|d\|^2_H\}
\end{equation}
for all $t \in [0, 1]$.
As a consequence of \cref{eq:est-01}, there exists a positive constant $r_{\rho}$ such that $\max \{ \|w_0\|_H, \|\eta_0\|_Z, \|d\|_H, \|y_\mathrm{ref}\|_Z \} \leq r_\rho$ implies
$\|w(t)\|_H \leq \tilde{r}$ for all $t \in [0, 1]$,
where $\tilde{r}$ is defined in \cref{lem:local-coercivity}. Therefore, for such initial data,  we can use \cref{eq:local-coercivity} in \cref{eq:energy-id} to refine our previous estimate: for all positive $\eps$ and $\mu$, on  $(0, 1)$ we have
\begin{multline}
\label{eq:energy-id-young}
\frac{1}{2}\dt{}  \|[w, \eta]\|^2_{H\times Z, \rho} \leq - \alpha \|w\|^2_H + \frac{1}{2\eps} \|B\|_{\mathcal{L}(U, H)}^2 \|\K(w)^*\eta \|^2_U + \eps \|w\|^2_H  \\ + \frac{1}{2\eps} \|d\|^2_H
 - \rho  \|\K(w)^* \eta \|^2_U  + \frac{\rho}{2 \mu} \|\yr\|^2_Z +  \frac{\rho}{2\mu} \|\d \M(w) d \|^2_Z + \rho \mu \|\eta\|^2_Z.
\end{multline}
First, recall that we have either  $\d \M$ globally Lipschitz continuous or $d = 0$. Hence, there exists $k > 0$ independent of $[w_0, \eta_0]$ and $[d, \yr]$ such that $\|\d\M(w)d\|^2_Z \leq k \|d\|^2_H$.
By choosing $\eps = \alpha / 2$ and $ \mu = \tilde{\lambda}/ 4 $, we deduce from \cref{eq:energy-id-young} the following differential inequality, valid on $(0, 1)$:
\begin{equation}
\label{eq:est-good-signs}
\frac{1}{2}\dt{}  \|[w, \eta]\|^2_{H\times Z, \rho}  \leq - \frac{\alpha}{2} \|w\|^2_H - \frac{\rho \tilde{\lambda}}{4} \|\eta\|^2_Z  + \frac{\rho}{2 \mu} \|\yr\|^2_Z + \left \{ \frac{\rho k}{2 \mu} + \frac{1}{2 \epsilon} \right \} \|d \|^2_H
\end{equation}
Applying Gr\"onwall's inequality to \cref{eq:est-good-signs} yields
\begin{multline}
\label{eq:estimate-finite-time}
\|[w(t), \eta(t)]\|^2_{H\times Z, \rho} \\ \leq  \exp(- \kappa t)   \|[w_0, \eta_0]\|^2_{H\times Z, \rho} +  K_\rho(1 - \exp(- \kappa t))\| [d, \yr]\|^2_{H\times Z}
\end{multline}
for all $t \in [0, 1]$, where  $\kappa$ is defined in \cref{eq:kappa} and $K_\rho \geq 1$ is some constant independent of $[w_0, \eta_0]$ and $[d, \yr]$.
Next, by norm equivalence, there exists $r_{1, \rho} > 0$ such that the following implication holds: if $[w_0, \eta_0]$ and  $[d, \yr]$ are in $\mathcal{B}_{H \times Z}(0, r_{1, \rho})$, then
\begin{equation}
\|[w_0, \eta_0] \|_{H \times Z, \rho} \leq 2^{-1/2} r_\rho, \quad \mbox{and} \quad \|[d, \yr]\|_{H \times Z} \leq (2 K_\rho)^{-1/2} r_\rho.
\end{equation}
Now, we claim that
the estimate \cref{eq:estimate-finite-time} remains valid for all $t \geq 0$. Indeed,
\cref{eq:estimate-finite-time} shows that $\|[w(1), \eta(1)]\|_{H\times Z, \rho} \leq r_\rho$. Therefore, by definition of $r_\rho$, we infer from the estimate \cref{eq:est-01} applied to the initial data $[w(1), \eta(1)]$ that $\|w(t)\|_H \leq \tilde{r}$ for all $t \in [1, 2]$.
As a consequence, the differential inequality \cref{eq:est-good-signs} is valid on $(0, 2)$; hence, \cref{eq:estimate-finite-time} holds on $[0, 2]$, with in particular $\|[w(2), \eta(2)\|_{H \times Z, \rho} \leq r_\rho$, and so on. The conclusion readily follows by induction.

Moreover, if it is assumed that \cref{eq:local-coercivity} holds for all $w \in H$, then \cref{eq:est-good-signs} is valid on $(0, + \infty)$ whatever the initial condition, so that \cref{eq:est-01} immediately holds for all $t \geq 0$. In this case, no additional condition on $[d, \yr]$ is required.
\end{proof}

\subsubsection{Proof of \cref{th:global-stab}} We can now prove the main result.

\textbf{Step 1: Setting all neighborhoods.} Pick $\rho$ such that $\rho \geq \max \{ \rho_0, \rho_1 \}$ as in \cref{lem:local-contraction,lem:attraction}. In the sequel, $[d, \yr]$ is assumed to lie in the intersection of $\mathcal{B}_H(0, r_{0, \rho}) \times Z$ and 
$\mathcal{B}_{H \times Z}(0, r_{1, \rho})$, so that the lemmas apply.
Now that $\rho$ is fixed, we will omit the dependence on $\rho$ in further notation.   \cref{lem:local-contraction} provides a neighborhood $\mathfrak{K}$ of the origin in $H \times Z$, which we will refer to as the contraction region,  where \cref{eq:local-cont} holds. On the other hand, according to \cref{eq:attraction} in \cref{lem:attraction}, the set
\begin{equation} 
\label{eq:def-Vdyr}
\mathfrak{V}_{d, \yr} \triangleq \mathcal{B}_{H \times Z, \rho}(0, K_\rho^{1/2}\|[d, \yr]\|_{H \times Z})
\end{equation}
attracts all solutions to \cref{eq:cl-new-var} originating from $\mathcal{B}_{H \times Z}(0, r_{1, \rho})$. By norm equivalence, any sufficiently small ball for the $\rho$-norm that is centered at the origin is contained in both $\mathfrak{K}$ and $\mathcal{B}_{H \times Z}(0, r_{1, \rho})$. That being said, as a consequence of \cref{eq:attraction} and \cref{eq:def-Vdyr}, there exists positive numbers $\delta$ and $r$ such that, having let $\mathcal{B} \triangleq \mathcal{B}_{H \times Z, \rho}(0, \delta)$, the following properties hold for any $[d, \yr] \in \mathcal{B}_{H \times Z}(0, r)$:
\begin{itemize}
\item The closure of the corresponding attracting set $\mathfrak{V}_{d, \yr}$ is contained in $\mathcal{B}$;
\item
Solutions  to \cref{eq:cl-new-var} with initial data in $\mathcal{B} $ remain in $\mathfrak{K}$.
\end{itemize}
If in addition we assume that \cref{eq:unif-coercive} holds, i.e., \cref{eq:local-coercivity} holds globally, then by  \cref{lem:attraction},
\begin{itemize}
\item The set $\mathfrak{V}_{d, \yr}$ attracts all solutions to \cref{eq:cl-new-var}, whatever the initial data.
\end{itemize}
In what follows, we omit the dependence on $[d, \yr]$ in the notation and we denote by $\{ \tilde{\T}_t \}$ the evolution semigroup associated with \cref{eq:cl-new-var}.
Then, since solutions originating from (the non-empty open set) $\mathcal{B}$ remain in the contraction region $\mathfrak{K}$, we infer from \cref{eq:local-cont} together with a density argument that
\begin{equation}
\label{eq:cont-prop}
\| \tilde{\T}_t[w_1, \eta_1] - \tilde{\T}_t[w_2, \eta_2]\|_{H \times Z, \rho} \leq \exp(- \kappa t) \|[w_1, \eta_1] - [w_0, \eta_0]\|_{H\times Z, \rho}
\end{equation}
for all $t \geq 0$ and $[w_i, \eta_i] \in \mathcal{B}$, $i \in \{ 1, 2\}$.

\textbf{Step 2: Existence of a fixed point.} Pick an arbitrary $[w_0, \eta_0] \in \mathcal{B}$. By \cref{eq:cont-prop} and a usual contraction argument, we see that
\begin{equation}
\{ \tilde{\T}_{n}[w_0, \eta_0] \}_{n \geq 0} ~\mbox{is a Cauchy sequence in}~ H \times Z
\end{equation}
and  converges  to a fixed point $[w_0^\star, \eta_0^\star]$ of the nonlinear operator $\tilde{\T}_1$. Now, consider the $\omega$-limit set $\omega([w_0, \eta_0])$ of $[w_0, \eta_0]$ with respect to the evolution semigroup $\{\tilde{\T}_t\}$. By the sequential characterization of $\omega$-limit sets (see \cite[Lemma 2.1, p.19]{chueshov_introduction_2002}), we observe that $[w_0^\star, \eta_0^\star] \in \omega([w_0, \eta_0])$, which means that $\omega([w_0, \eta_0])$ is non-empty.
Moreover,  it is positively invariant by definition. Finally, since $\mathfrak{V}_{d, \yr}$ attracts all solutions originating from $\mathcal{B}$, we must have
\begin{equation}
\omega([w_0, \eta_0]) \subset \overline{\mathfrak{V}_{d, \yr}} \subset \mathcal{B}.
\end{equation}
By following \textit{verbatim}\footnote{
The only difference with \cite[Theorem 1]{dafermos_asymptotic_1973}
is that $\{\tilde{\T}_t\}$ is a contraction only on a region (containing the $\omega$-limit set) that is not \textit{a priori} positively invariant. However, in order to obtain the isometry property, contraction is only needed on the points of the $\omega$-limit set. 
} the proof of \cite[Theorem 1]{dafermos_asymptotic_1973}, we obtain that for each $t \geq 0$, $\tilde{\T}_t$ is an isometry on $\omega([w_0, \eta_0])$. On the other hand,  for positive $t$, $\tilde{\T}_t$ is a strict contraction on $\omega([w_0, \eta_0])$; thus, $\omega([w_0, \eta_0])$ must be reduced to the singleton $\{[w_0^\star, \eta_0^\star]\}$. By invariance of the $\omega$-limit set, $[w^\star_0, \eta^\star_0]$ is fixed by the semigroup $\{\tilde{\T_t}\}$. Moreover, it follows from \cref{eq:cont-prop} that  $[w_0^\star, \eta_0^\star]$ is the unique fixed point of $\{\tilde{\T_t}\}$ in $\mathcal{B}$ and is exponentially attractive in $\mathcal{B}$. Thus, we write $[w^\star, \eta^\star] \triangleq [w^\star_0, \eta^\star_0]$.

\textbf{Step 3: The fixed point lies in the domain.} 
We now prove that $w^\star \in \D(\A)$. Let $\eps > 0$ sufficiently small and consider the ball 
$\mathcal{C} \triangleq {\mathcal{B}_{H\times Z, \rho}([w^\star, \eta^\star], \eps)} \subset \mathfrak{K}.
$
Since $\D(\A)$ is dense in $H$ and $\mathcal{C}$ has non-empty interior, we can pick some $[w_0, \eta_0] \in [\D(\A) \times Z] \cap \mathcal{C}$. Let $[w(t), \eta(t)] \triangleq \tilde{\T}_t[w_0, \eta_0]$. As a strong solution to \cref{eq:cl-new-var}, $[w, \eta]$
is differentiable  in $H \times Z$ for a.e. time and
\begin{equation}
\dt{}[w, \eta] + \Acl[w, \eta] = 0 \quad \mbox{a.e.}
\end{equation}
Besides, we infer from \cref{eq:cont-prop} that $\mathcal{C}$ is positively invariant so that $\{\tilde{\T}_t\}$ restricted to $\mathcal{C}$ is still a well-defined contraction semigroup. Thus, we can apply \cite[Theorem 1.4]{crandall_semi-groups_1969} to obtain
\begin{equation}
\|\Acl[w, \eta]\|_{H \times Z, \rho} \leq \|\Acl[w(t_0), \eta(t_0)]\|_{H \times Z, \rho} \quad \mbox{a.e. on}~ (t_0, +\infty)
\end{equation}
for some $t_0 \geq 0$. In particular, 
$\|\A(w)\|_H$ is bounded  a.e.  on $(t_0, +\infty)$.
On the other hand, $w(t)$ converges to $w^\star$ in $H$ when $t$ goes to $+ \infty$. Therefore,  it follows from $\A$ being maximal monotone and \cite[Lemma 2.3]{crandall_semi-groups_1969} that $w^\star \in \D(\A)$.

\textbf{Step 4: Conclusion.} Since $[w^\star, \eta^\star]$ belongs to $\D(\A) \times Z$ and is fixed by $\{\tilde{\T}_t \}$, $\Acl[w^\star, \eta^\star] = 0$. We come back to the original $[w, z]$ coordinates by letting $z^\star \triangleq \eta^\star + \M(w^\star)$ and
$\mathcal{N} \triangleq \{ [w, \eta + \M(w)], [w, \eta] \in \mathcal{B} \}.
$
Then, $[w^\star, z^\star]$ belongs to $[\D (\A) \times Z] \cap \mathcal{N}$ and is an equilibrium for \cref{eq:control-system}-\cref{eq:feedback-law}; hence, $Cw^\star = \yr$. Because $\M$ vanishes at $0$ and is continuous, $\mathcal{N}$ is indeed a neighborhood of $0$. Local exponential stability of $[w^\star, z^\star]$ with decay rate $\kappa$ and bassin of attraction containing $\mathcal{N}$ follows from \cref{eq:cont-prop} and our prior remarks regarding the change of coordinates; the constant $M$ in \cref{eq:exp-cont} comes from \cref{eq:equiv-z-eta} and equivalence with the $\rho$-norm. Additionally, under the stronger condition \cref{eq:unif-coercive}, $[w^\star, z^\star]$ is globally asymptotically stable. The proof is now complete.

\subsection{Proof of \cref{th:M-semilinear}
(semilinear case)}
\label{sec:proof-semilinear}
We first give some auxiliary results valid under the hypotheses of \cref{sec:semilinear}, and then we prove \cref{th:M-semilinear}. 

\subsubsection{Preliminaries}  We endow $\D(A)$ with the graph norm. First, we claim that \emph{weak} solutions (in the sense of nonlinear semigroup theory) to 
\begin{equation}
\label{eq:original-w}
\dt{w} + \A(w) = 0.
\end{equation}
coincide with \emph{mild} solutions (in the sense of perturbation of linear equation) -- see, e.g., \cite[Theorem 11.1.5]{CurZwa20book}. In other words, solutions $t \mapsto \T_t w_0$ to \cref{eq:original-w} with initial data $w_0 \in H$ are characterized by
\begin{equation}
\label{eq:variation-const}
\T_t w_0 = \S_t w_0 - \int_0^t  \S_{t - s}F(\T_s w_0) \, \d s, \quad \forall t\geq 0.
\end{equation}
\emph{Strong} solutions $w$ to \cref{eq:original-w}
enjoy the regularity $w \in \mathcal{C}^1(\R_+, H) \cap \mathcal{C}(\R_+, \D(A))$.
\cref{lem:int-formula} given below allows us to circumvent the possible unboundedness of the output operator $C$ and,  together with the $A$-boundeness of $C$, guarantees that $\M$ given by the limit in \cref{eq:M-int}
is a well-defined nonlinear mapping on the whole space $H$.

\begin{lemma}[Integral formula]
\label{lem:int-formula}
For any $w_0 \in H$,
\begin{equation}
\label{eq:int-formula}
 \lim_{\tau \to + \infty} \int_0^\tau \mathcal{T}_t w_0 \, \mathrm{d}t = A^{-1}w_0 - A^{-1}\int_0^{+\infty} F(\mathcal{T}_tw_0) \, \mathrm{d}t.
\end{equation}
In particular, $ \lim_{\tau \to + \infty} \int_0^{\tau} \mathcal{T}_t(w_0) \, \mathrm{d}t$ belongs to $\mathcal{D}(A)$.
\end{lemma}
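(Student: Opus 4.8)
The plan is to exploit the mild-solution representation \eqref{eq:variation-const} and integrate it against $t$ on $[0, \tau]$. First I would observe that under \cref{hyp:semilinear}, the exponential contraction \cref{eq:exp-cont} applied with $w_1 = w_0$ and $w_2 = 0$ (using $\A(0) = 0$, so $\T_t 0 = 0$) gives $\|\T_t w_0\|_H \leq \exp(-\alpha t)\|w_0\|_H$; since $F \in \mathcal{C}^1(H)$ with $\d F(0) = 0$ and $\d F$ locally Lipschitz, $F$ is Lipschitz on the bounded set $\mathcal{B}_H(0, \|w_0\|_H)$ with $F(0) = 0$, hence $\|F(\T_t w_0)\|_H \leq L\|\T_t w_0\|_H \leq L\exp(-\alpha t)\|w_0\|_H$ for a constant $L$ depending on $\|w_0\|_H$. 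Consequently $\int_0^{+\infty} F(\T_t w_0)\,\d t$ converges absolutely in $H$, and the right-hand side of \eqref{eq:int-formula} is well-defined (recall $0$ is in the resolvent set of $A$, so $A^{-1} \in \mathcal{L}(H)$).

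Next I would integrate \eqref{eq:variation-const} over $t \in [0, \tau]$ and use a Fubini argument on the double integral $\int_0^\tau \int_0^t \S_{t-s} F(\T_s w_0)\,\d s\,\d t = \int_0^\tau \left(\int_s^\tau \S_{t-s}\,\d t\right) F(\T_s w_0)\,\d s$. The key classical identity is that for the $C_0$-semigroup $\{\S_t\}$ with $0 \in \varrho(A)$, one has $\int_0^{\sigma} \S_r x\,\d r = A^{-1}(\Id - \S_\sigma)x = A^{-1}x - A^{-1}\S_\sigma x$ for all $x \in H$ and $\sigma \geq 0$ (this is just the integrated form of $\dt{}\S_t = -A\S_t$). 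Applying this with $x = w_0$ to the first term $\int_0^\tau \S_t w_0\,\d t$ and with $x = F(\T_s w_0)$, $\sigma = \tau - s$ to the inner integral, I get
\[
\int_0^\tau \T_t w_0\,\d t = A^{-1}w_0 - A^{-1}\S_\tau w_0 + A^{-1}\int_0^\tau F(\T_s w_0)\,\d s - A^{-1}\int_0^\tau \S_{\tau-s}F(\T_s w_0)\,\d s.
\]
Then I would let $\tau \to +\infty$: the second term $A^{-1}\S_\tau w_0 \to 0$ because $\|\S_\tau w_0\|_H \to 0$ (the semilinear system's contraction forces $\S_\tau$ itself to decay, or alternatively one uses that $\T_\tau w_0 \to 0$ together with \eqref{eq:variation-const}; in any case $\|\S_\tau w_0\| \le \|\T_\tau w_0\| + \int_0^\tau \|\S_{\tau-s}F(\T_s w_0)\| \d s$ and both pieces can be shown to vanish). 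The third term converges to $A^{-1}\int_0^{+\infty}F(\T_s w_0)\,\d s$ by the absolute convergence established above and boundedness of $A^{-1}$. For the fourth term I would split $\int_0^\tau = \int_0^{\tau/2} + \int_{\tau/2}^\tau$: on $[\tau/2, \tau]$ the integrand is bounded by $C\exp(-\alpha s) \le C\exp(-\alpha\tau/2)$ times the contraction bound on $\|\S\|$ (which is $1$, since $\{\S_t\}$ is a contraction semigroup here), so this piece is $O(\tau \exp(-\alpha\tau/2)) \to 0$; on $[0, \tau/2]$ we have $\S_{\tau-s}$ with $\tau - s \geq \tau/2$, so $\|\S_{\tau-s}F(\T_s w_0)\|_H \le \|F(\T_s w_0)\|_H \le C\exp(-\alpha s)$ and additionally $\S_{\tau-s} \to 0$ strongly; dominated convergence (dominating function $C\exp(-\alpha s) \in L^1$) gives that this piece also vanishes. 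Hence the limit exists and equals $A^{-1}w_0 + A^{-1}\int_0^{+\infty}F(\T_s w_0)\,\d s \in \D(A)$, since $A^{-1}$ maps $H$ into $\D(A)$.

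The main obstacle I anticipate is the rigorous justification of the Fubini interchange and, more delicately, controlling the fourth term $A^{-1}\int_0^\tau \S_{\tau-s}F(\T_s w_0)\,\d s$ as $\tau \to \infty$ — one needs both the integrability coming from the exponential decay of $F(\T_s w_0)$ and the strong (not uniform) decay of the free semigroup $\S_r$ as $r \to \infty$. The decay of $\S_r$ deserves care: it does \emph{not} follow from \cref{hyp:semilinear} directly that $\{\S_t\}$ is exponentially stable, but strong stability $\S_r x \to 0$ can be extracted because the nonlinear flow $\T$ is exponentially stable and $F$ is a ``small'' (vanishing-at-zero, locally Lipschitz) perturbation — alternatively one works entirely with the $\T$-representation and avoids invoking $\S_r \to 0$ for large argument by instead bounding $\|\S_{\tau - s} F(\T_s w_0)\|_H \le \|F(\T_s w_0)\|_H$ uniformly and using dominated convergence with the pointwise-in-$s$ limit $\lim_{\tau\to\infty}\S_{\tau-s}F(\T_s w_0) = 0$, which holds as soon as $\S_r y \to 0$ for each fixed $y$. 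Establishing this last pointwise fact cleanly is where I would spend the most effort; everything else is routine semigroup bookkeeping.
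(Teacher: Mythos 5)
Your proof is correct and follows essentially the same route as the paper's: integrate the mild-solution formula \cref{eq:variation-const} over $(0,\tau)$, apply Fubini, use the identity $\int_0^\sigma \S_r x\,\d r = A^{-1}(x-\S_\sigma x)$, and pass to the limit by dominated convergence with the dominating function $\exp(-\alpha s)$ coming from the decay of $F(\T_s w_0)$. The obstacle you anticipate --- strong decay of $\S_r$ --- is in fact immediate: taking $w=0$ in \cref{eq:as-stab-lin} and using $\d F(0)=0$ gives $(Ah,h)_H \geq \alpha\|h\|_H^2$ for all $h\in\D(A)$, hence $\|\S_t\|_{\mathcal{L}(H)}\leq \exp(-\alpha t)$, which is precisely the ``$\alpha$-exponential stability of $\{\S_t\}$'' the paper invokes to build its dominating function.
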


\begin{proof}
Let $\tau > 0$ and $w_0 \in \D(A)$. Then,  $ t \mapsto \T_t w_0 \in \mathcal{C}^1(\R_+, H) \cap \mathcal{C}(\R_+, \D(A))$ solves \cref{eq:original-w} in a classical sense. After applying the bounded operator $A^{-1}$ to the differential equation, one obtains
\begin{equation}
\label{eq:diff-A-1}
\dt{} [A^{-1}\T_t w_0]  +\T_t w_0 + A^{-1} F(\T_t w_0) = 0, \quad \forall t \geq 0.
\end{equation}
Integrating \cref{eq:diff-A-1} over $(0, \tau)$ yields
\begin{equation}
\label{eq:int-A-1}
\int_0^\tau \T_t w_0 \, \d t = A^{-1} w_0 - A^{-1} \T_\tau w_0 - A^{-1} \int_0^\tau F(\T_t w_0) \, \d t.
\end{equation}
Because the map $w_0 \mapsto (t\mapsto \T_t w_0)$ is continuous from $H$ to $\mathcal{C}([0, \tau], H)$, a density argument shows that \cref{eq:int-A-1} is actually valid for all $w_0 \in H$. To obtain \cref{eq:int-formula}, it then suffices to let $\tau \to + \infty$, having in mind that $\T_\tau w_0 \to 0$ in $H$ and the integral at the right-hand side of \cref{eq:int-A-1} is absolutely convergent.
\end{proof}

Now, let us come back to the uncontrolled $w$-equation \cref{eq:original-w}, which we linearize around a given trajectory $\{\T_t w_0, t \geq 0 \}$:
\begin{equation}
\label{eq:first-var}
\frac{\mathrm{d}v}{\mathrm{d}t} + Av + \mathrm{d}F(\mathcal{T}_t w_0) v = 0.
\end{equation}
Equation \cref{eq:first-var} is linear but non-autonomous in general. Given $h \in H$,  \cref{eq:first-var} possesses a unique mild solution $t \mapsto v(t) = v( t ; w_0, h)$ satisfying $v(0) = h$ and $v \in \mathcal{C}(\R_+, H)$ (see \cite[Theorem 1.2, p.184]{pazy_semigroups_2012}).  Given $w_0$ in $H$, the next lemma states that the nonlinear operators $\T_t$ are all differentiable at $w_0$ and their differentials coincide with the evolution family associated with \cref{eq:first-var}. 
This is a classical fact for sufficiently smooth nonlinear dynamics; here, we give a proof that matches our particular set of hypotheses.

\begin{lemma}[Differentiability of the semigroup]
\label{lem:diff-sg}
Each operator $\mathcal{T}_t$ is Fr\'echet differentiable. Furthermore, for any $t\geq 0$ and $w_0 \in H$, the differential $\mathrm{d}\mathcal{T}_t(w_0)$ is given by
\begin{equation}
\mathrm{d}\mathcal{T}_t(w_0)h = v(t; w_0, h) \quad \mbox{for all}~ h \in H,
\end{equation}
where $t \mapsto v(t; w_0, h)$ is the unique mild solution to \cref{eq:first-var} with initial data $h$.
\end{lemma}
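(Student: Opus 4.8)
The plan is to prove differentiability of $\T_t$ at a fixed point $w_0 \in H$ by a direct estimate: set $w^h(t) \triangleq \T_t(w_0 + h)$ and $w(t) \triangleq \T_t w_0$, let $v(t) = v(t; w_0, h)$ be the mild solution of the first variation equation \cref{eq:first-var}, and show that the remainder $r^h(t) \triangleq w^h(t) - w(t) - v(t)$ satisfies $\|r^h(t)\|_H = o(\|h\|_H)$ uniformly on compact time intervals. The natural tool is the variation-of-constants representation \cref{eq:variation-const}: each of $w^h$, $w$ and $v$ solves a mild equation driven by $\{\S_t\}$, so subtracting the three identities gives an integral equation for $r^h$ of the form
\begin{equation}
r^h(t) = \int_0^t \S_{t-s}\Big[ F(w^h(s)) - F(w(s)) - \d F(w(s)) v(s) \Big] \, \d s .
\end{equation}

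The first step is to insert and remove the term $\d F(w(s))\big(w^h(s)-w(s)\big)$ inside the bracket, splitting it as $\big[F(w^h(s)) - F(w(s)) - \d F(w(s))(w^h(s)-w(s))\big] + \d F(w(s)) r^h(s)$. For the first group, a first-order Taylor estimate with integral remainder and the local Lipschitz continuity of $\d F$ (from \cref{hyp:F}) give a bound of order $o(\|w^h(s)-w(s)\|_H)$; and by the contraction estimate \cref{eq:exp-cont} — or simply Gr\"onwall applied to the mild equation for $w^h - w$ — we know $\|w^h(s)-w(s)\|_H \leq C\|h\|_H$ on $[0,T]$, so this contribution is $o(\|h\|_H)$ uniformly on $[0,T]$. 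For the second group, $\|\d F(w(s))\|_{\mathcal{L}(H)}$ is bounded on $[0,T]$ (continuity of $\d F$ along the continuous trajectory), so $\|\S_{t-s}\d F(w(s)) r^h(s)\|_H \leq M_T \|r^h(s)\|_H$ where $M_T$ bounds $\|\S_t\|$ on $[0,T]$. Assembling, $\|r^h(t)\|_H \leq \epsilon(\|h\|_H) + M_T \int_0^t \|r^h(s)\|_H\,\d s$ with $\epsilon(\cdot)/\|h\|_H \to 0$, and Gr\"onwall's inequality yields $\|r^h(t)\|_H \leq \epsilon(\|h\|_H) e^{M_T t}$, which is $o(\|h\|_H)$ on $[0,T]$. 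This proves that $\T_t$ is Fr\'echet differentiable at $w_0$ with $\d\T_t(w_0) h = v(t; w_0, h)$, the map $h \mapsto v(t;w_0,h)$ being linear (by uniqueness of mild solutions to the linear equation \cref{eq:first-var}) and bounded (again by Gr\"onwall on \cref{eq:first-var}).

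I expect the main technical obstacle to be making the Taylor remainder estimate for $F$ genuinely uniform in $s$ on $[0,T]$: one must control $\big\|F(w^h(s)) - F(w(s)) - \d F(w(s))(w^h(s)-w(s))\big\|_H$ by something like $\sup_{\theta\in[0,1]} \|\d F(w(s) + \theta(w^h(s)-w(s))) - \d F(w(s))\|_{\mathcal{L}(H)} \cdot \|w^h(s)-w(s)\|_H$, and then argue that as $\|h\|_H \to 0$ the supremum over $s\in[0,T]$ and $\theta\in[0,1]$ of that operator-norm difference tends to $0$. This follows because $\{w(s): s\in[0,T]\}$ is compact in $H$, the perturbed points stay in a fixed bounded neighborhood of this set for $\|h\|_H$ small, and $\d F$ is uniformly continuous on bounded sets (being locally Lipschitz). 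A minor additional point is that differentiability must be checked at \emph{every} $w_0 \in H$, not just near the origin, so all constants above are allowed to depend on $w_0$ and $T$ but that is harmless. Finally, I would note that continuity of $w_0 \mapsto \d\T_t(w_0)$ in $\mathcal{L}(H)$, if needed later (e.g. for \cref{eq:dM}), follows by a similar comparison of first variation equations, but the statement as given only asserts existence of the differential, so the argument above suffices.
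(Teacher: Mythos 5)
Your proposal is correct and follows essentially the same route as the paper: both express the remainder $\T_t(w_0+h)-\T_t w_0 - v(t)$ through the variation-of-constants formula, derive a Volterra-type integral inequality for it, and close with Gr\"onwall, with linearity and boundedness of $h \mapsto v(t;w_0,h)$ coming from the linear mild equation \cref{eq:first-var}. The only divergence is in controlling the Taylor remainder of $F$: the paper keeps it as a forcing term and invokes dominated convergence in the time integral (pointwise $o(\|h\|_H)$ from $F\in\mathcal{C}^1$, dominated via a local Lipschitz bound on $F$), whereas you use the local Lipschitz continuity of $\d F$ from \cref{hyp:F} together with the contraction estimate \cref{eq:exp-cont} to get a bound that is uniform in $s\in[0,T]$ (in fact $O(\|h\|_H^2)$), which is an equally valid and slightly more quantitative way to reach the same conclusion.
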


\begin{proof}
Let $\tau \geq 0$ and $w_0 \in H$. It is clear that the mapping $h \mapsto v(\tau; w_0, h)$  is linear; it is also continuous by  \cite[Theorem 1.2, p.184]{pazy_semigroups_2012}.
First, since $F$ is differentiable, the following Taylor formula holds:
\begin{equation}
\label{eq:taylor}
F(a + b) - F(a) = \mathrm{d}F(a) b + R(a, b), \quad \forall a, b \in H,
\end{equation}
with $ R(a, b) = o(\|b\|_H)$ when $\|b\|_H \to 0$ for fixed $a \in H$. Now, take a nonzero $h \in H$. 
Combining the variation of the constant formula \cref{eq:variation-const} with \cref{eq:taylor} leads to
\begin{multline}
\label{eq:diff-reste}
\mathcal{T}_t(w_0 + h) - \mathcal{T}_t w_0 = \mathcal{S}_t h - \int_0^t \mathcal{S}_{t - s} \mathrm{d}F(\mathcal{T}_s w_0)\{\mathcal{T}_s(w_0 + h) - \mathcal{T}_s w_0  \} \, \mathrm{d}s \\  - \int_0^t \mathcal{S}_{t - s} R(\mathcal{T}_s w_0, \mathcal{T}_s(w_0 + h) - \mathcal{T}_s w_0) \, \mathrm{d}s.
\end{multline}
Omitting the dependence on $w_0$ and $h$ for the moment, we write $v(t) \triangleq v(t; w_0, h)$ and $\mathcal{R}(t) \triangleq  \mathcal{T}_t(w_0 + h) - \mathcal{T}_tw_0 - v(t)$. 
Now, taking the difference between \cref{eq:diff-reste} and the integral identity satisfied by $v$ as a mild solution to \cref{eq:first-var}, one obtains
\begin{equation}
\label{eq:diff-reste-bis}
\mathcal{R}(t) = \int_0^t \mathcal{S}_{t - s}\mathrm{d}F(\mathcal{T}_s w_0) \mathcal{R}(s) \, \mathrm{d}s - \int_0^t \mathcal{S}_{t - s}  R(\mathcal{T}_s w_0, \mathcal{T}_s(w_0 + h) - \mathcal{T}_s w_0) \, \mathrm{d}s
\end{equation}
holding for all $0 \leq t \leq \tau$. Next, because $\d F$ is continuous on the compact set $\{\mathcal{T}_sw_0, 0 \leq s \leq \tau \}$, $\|\d F(\T_s w_0) \|_{\mathcal{L}(H)}$ is bounded by some $m$ independent of $h$.
Besides, $\|\S_s \|_{\mathcal{L}(H)} \leq 1$ for all $s \geq 0$. Therefore, it follows from \cref{eq:diff-reste-bis} that for all $0 \leq t \leq \tau$, 
\begin{equation}
\label{eq:estimate-rest}
\|\mathcal{R}(t)\|_H \leq m \int_0^t \|\mathcal{R}(s) \|_H \, \mathrm{d}s + \int_0^t \| R(\mathcal{T}_s w_0, \mathcal{T}_s(w_0 + h) - \mathcal{T}_s w_0) \|_H \, \mathrm{d}s.
\end{equation}
Using Gr\"onwall's inequality in its integral form, we deduce from \cref{eq:estimate-rest} that
\begin{equation}
\label{eq:est-final-R}
\frac{\|\mathcal{R}(\tau)\|_H}{\|h\|_H} \leq \exp(m\tau) \int_0^\tau \frac{ \| R(\mathcal{T}_s w_0, \mathcal{T}_s(w_0 + h) - \mathcal{T}_s w_0) \|_H}{\|h\|_H} \, \mathrm{d}s.
\end{equation}
To obtain the desired differentiability property, it suffices to show that the right-hand side of \cref{eq:est-final-R} converges to $0$ as $\|h\|_H$ goes to $0$. This is done using Lebegue's dominated convergence theorem. Let $s \in [0, \tau]$. 
By the contraction property of $\{\T_t\}$, for any nonzero $h \in H$ such that $\mathcal{T}_s(w_0 + h) - \mathcal{T}_sw_0$ is nonzero,  we have
\begin{equation}
\label{eq:R-frac}
\frac{ \| R(\mathcal{T}_s w_0, \mathcal{T}_s(w_0 + h) - \mathcal{T}_s w_0) \|_H}{\|h\|_H} \leq \frac{\| R(\mathcal{T}_s w_0, \mathcal{T}_s(w_0 + h) - \mathcal{T}_s w_0) \|_H}{ \|\mathcal{T}_s(w_0 + h) - \mathcal{T}_sw_0\|_H},
\end{equation}
and if  $\mathcal{T}_s(w_0 + h) - \mathcal{T}_sw_0 = 0$, then $R(\mathcal{T}_s w_0, \mathcal{T}_s(w_0 + h) - \mathcal{T}_s w_0) = 0$. Either way, when $\|h\|_H \to 0$, $\| \T_s (w_0 + h) - \T_ sw_0\|_H \to 0$, and by definition of the residual term $R$, the right-hand side of \cref{eq:R-frac} must converge to $0$ as well. 
To conclude the proof, let us estimate the left-hand side of \cref{eq:R-frac} uniformly with respect to $h$. 
We observe that the set of all points $\T_s(w_0 + h)$, $0 \leq s \leq \tau$, $\|h\|_H \leq 1$, is contained in some open ball, on which $F$ is $K$-Lipschitz continuous for some $K > 0$. Thus, using \cref{eq:taylor}, one obtains that the left-hand side of \cref{eq:R-frac} is smaller than $2K$.
\end{proof}
We continue by establishing exponential decay of solutions to \cref{eq:first-var}.
\begin{lemma}[Stability of the linearized equation]
\label{lem:decay-linearized}
 Let $w_0 \in H$. For any $h \in H$, the solution $t \mapsto v(t; w_0, h)$ to \cref{eq:first-var} with initial data $h$ satisfies
\begin{equation}
\label{eq:decay-linearized}
\|v(t; w_0, h) \|_H \leq \exp(- \alpha t) \|h\|_H, \quad \forall t \geq 0.
\end{equation}
\end{lemma}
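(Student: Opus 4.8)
The plan is to deduce \cref{eq:decay-linearized} directly from \cref{lem:diff-sg} together with the incremental contraction estimate \eqref{eq:exp-cont} for the nonlinear semigroup $\{\T_t\}$. Recall that \cref{hyp:semilinear} implies \cref{hyp:nonlin-coer} (as observed right after the statement of \cref{hyp:semilinear}), so \eqref{eq:exp-cont} is in force: for every $t \geq 0$, the map $\T_t \colon H \to H$ is globally Lipschitz continuous with Lipschitz constant $\exp(-\alpha t)$. The point is then simply that a Fréchet differentiable map which is globally $L$-Lipschitz has differential bounded in operator norm by $L$ at every point.

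Concretely, fix $t \geq 0$, $w_0 \in H$ and $h \in H$. By \cref{lem:diff-sg}, $\T_t$ is Fréchet differentiable at $w_0$ with $v(t; w_0, h) = \d\T_t(w_0)h$, so, using \eqref{eq:exp-cont} in the difference quotient,
\[
\|v(t; w_0, h)\|_H = \|\d\T_t(w_0)h\|_H = \lim_{s \to 0^+} \frac{\|\T_t(w_0 + sh) - \T_t w_0\|_H}{s} \leq \lim_{s \to 0^+} \frac{\exp(-\alpha t)\, s \,\|h\|_H}{s} = \exp(-\alpha t)\|h\|_H,
\]
which is exactly \cref{eq:decay-linearized}.

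Since the lemma is thus an immediate corollary of \cref{lem:diff-sg}, no genuine obstacle is expected; the only step requiring a line of care is the passage from the global Lipschitz bound on $\T_t$ to the pointwise operator-norm bound on $\d\T_t(w_0)$, carried out by the difference-quotient computation above. For completeness one could argue instead by an energy estimate: when $w_0 \in \D(\A)$ and $h \in \D(A)$, the mild solution $v$ of \eqref{eq:first-var} is a classical solution, and \eqref{eq:as-stab-lin} gives $\tfrac12 \dt{}\|v\|_H^2 = -(Av + \d F(\T_t w_0)v, v)_H \leq -\alpha\|v\|_H^2$, whence \cref{eq:decay-linearized} by Grönwall's inequality, and then by density in $h$ and approximation of $w_0$ by elements of $\D(\A)$; this route, however, first requires setting up well-posedness of the non-autonomous linear equation \eqref{eq:first-var} in the strong sense, which the differentiation argument sidesteps entirely.
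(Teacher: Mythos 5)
Your argument is correct, but it is genuinely different from the one in the paper. The paper proves \cref{lem:decay-linearized} by working directly on the non-autonomous linear equation \cref{eq:first-var}: it approximates the mild solution by strong solutions of $\d v_n/\d t + Av_n + f_n = 0$ with $h_n \in \D(A)$ and $f_n \to \d F(\T_\cdot w_0)v$ in $L^2$, applies the coercivity hypothesis \cref{eq:as-stab-lin} in the resulting energy identity, and concludes by Gr\"onwall and passage to the limit --- essentially the ``alternative route'' you sketch at the end, with the well-posedness issue you flag handled by approximating the inhomogeneity rather than solving the non-autonomous problem strongly. Your main argument instead derives the bound as a corollary of \cref{lem:diff-sg} combined with the incremental contraction \cref{eq:exp-cont}: since $\T_t$ is globally $\exp(-\alpha t)$-Lipschitz (the constant $\alpha$ of \cref{eq:as-stab-lin} does carry over to \cref{eq:nonlin-coer} and hence to \cref{eq:exp-cont}, via the segment-integral remark after \cref{hyp:semilinear}) and Fr\'echet differentiable with $\d\T_t(w_0)h = v(t;w_0,h)$, the difference-quotient bound gives $\|\d\T_t(w_0)\|_{\mathcal{L}(H)} \leq \exp(-\alpha t)$. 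This is logically sound and non-circular, since the proof of \cref{lem:diff-sg} uses only the contraction property of $\{\T_t\}$ and Gr\"onwall, not \cref{lem:decay-linearized}. What each approach buys: yours is shorter and sidesteps all approximation machinery, at the price of making the decay estimate depend on the (nontrivial) differentiability result \cref{lem:diff-sg}; the paper's proof is self-contained at the level of the linearized equation, so it yields the decay of $v(\cdot; w_0, h)$ for the mild solution of \cref{eq:first-var} independently of whether that solution is identified with the differential of the semigroup, and it uses the hypothesis \cref{eq:as-stab-lin} where it naturally lives.
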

\begin{proof}
Let $w_0$ and $h$ in $H$. Let $\tau \geq 0$ and $f \triangleq t \mapsto \d F(\T_t w_0)v(t) \in \mathcal{C}([0, \tau], H)
$. Then, pick sequences $\{h_n\} \subset \D(A)$ and absolutely continuous $\{ f_n\}$
such that
$h_n \to h$ in $H$ and $f_n \to f$ in $L^2(0, \tau ; H)
$. For each $n$,  there exists a unique strong solution $v_n$ to
$ 
\d {v_n}/ \d t + Av_n + f_n(t) = 0
$
satisfying the initial condition $v_n(0) = h_n$. Furthermore,
\begin{equation}
\label{eq:id-ener-vn}
\frac{1}{2} \dt{} \|v_n\|^2_H = -(Av_n, v_n)_H - (f_n, h)_H \quad \mbox{ a.e. on}~ (0, \tau),
\end{equation}
and $v_n$ converges to $v$ in $\mathcal{C}([0, \tau], H)$. Plugging \cref{eq:as-stab-lin}  into \cref{eq:id-ener-vn} yields
\begin{equation}
\label{eq:id-ener-dF}
\frac{1}{2} \dt{} \|v_n\|^2_H  \leq - \alpha \|v_n\|^2_H + (\d F(\T_t w_0)v_n - f_n, v_n)_H \quad \mbox{ a.e. on}~ (0, \tau).
\end{equation}
We deduce from \cref{eq:id-ener-dF} that for all $ 0 \leq t \leq \tau$,
\begin{equation}
\label{eq:id-artif}
\|v_n(t)\|^2_H \leq \exp(-2\alpha t)\|h_n\|^2_H + \frac{1}{2\alpha} \int_0^\tau | (\d F(\T_s w_0)v_n(s) - f_n(s), v_n(s))_H| \, \d s.
\end{equation}
As $n$ goes to $+ \infty$, the integral term in \cref{eq:id-artif} tends to $0$ and we obtain the desired result by passing to the limit.
\end{proof}

\subsubsection{Proof of \cref{th:M-semilinear}} 
Now that we have established that all objects in the statement of the theorem are well-defined, we can give the proof of the result. 
Recall that $CA^{-1}$ is a bounded linear operator. 
In view of the formula \cref{eq:int-formula}, 
the desired properties of $\M$ and $\d \M$ readily follow from those of the mappings
$
w  \mapsto \int_0^{+\infty} F(\T_t w) \, \d t
$
and
$
w  \mapsto \int_0^{+\infty} \d F(\T_t w) \d \T_t (w) \, \d t,
$
which we investigate next.

\textbf{Step 1: Differentiability.} 
Let $w_0 \in H$. It suffices to prove that
\begin{equation}
\label{eq:quot-diff-M}
 \int_0^{+\infty} \frac{\|F(\mathcal{T}_t(w_0 + h)) - F(\mathcal{T}_t w_0) - \mathrm{d}F(\mathcal{T}_tw_0) \mathrm{d}\mathcal{T}_t(w_0)h \|_H}{\|h\|_H} \, \mathrm{d}t \to 0
\end{equation}
when $\|h\|_H$ goes to $0$. We use Lebesgue's dominated convergence theorem. Since $F$ and $\mathcal{T}_t$ are differentiable, by the chain rule, the integrand in \cref{eq:quot-diff-M} converges to $0$ pointwise.
Let us now find some integrable dominating function.
As in the proof of \cref{lem:diff-sg}, we can find some open ball where $F$ is $K$-Lipschitz and which contains the set of all $\T_t(w_0 + h)$, $\|h\|_H \leq 1$, $t \geq 0$. Thus, it follows from \cref{eq:exp-cont} and \cref{eq:decay-linearized} that
the integrand in \cref{eq:quot-diff-M} is dominated by $t \mapsto 2K\exp(- \alpha t)$, which is integrable.

\textbf{Step 2: Lipschitz continuity of the differential.}
Pick two elements $w_1$ and $w_2$ in $H$. We write $R \triangleq \max \{ \|w_1\|_H, \|w_2\|_H \}$. Subsequent estimates are motivated by the following decomposition:
\begin{multline}
\label{eq:decomposition}
\d F(\T_t w_1) \d \T_t (w_1) - \d F(\T_t w_2) \d \T_t (w_2)  \\ = \d F(\T_t w_1) [ \d \T_t(w_1) - \d \T_t(w_2)]  + [\d F (\T_t w_1) - \d F(\T_t w_2)] \d \T_t (w_2).
\end{multline}
First, $\T_t w_1$ and $\T_t w_2$ must remain in $\mathcal{B}_H(0, R)$, where $\d F$ is, say, $K_R$-Lipschitz continuous.  
 Since $\d F(0) = 0$, for all $t \geq 0$ we have
\begin{equation}
\label{eq:est-Ttw1}
\|\d F(\T_t w_1) \|_{\mathcal{L}(H)} \leq K_R \| \T_t w_1 \|_H \leq K_R \|w_1\|_H.
\end{equation}
Now, let us estimate $ \d \T_t(w_1) - \d \T_t(w_2)$ in operator norm. Pick $h \in H$. In what follows, we denote by $v(t)$ the difference
$
v(t) \triangleq \mathrm{d}\mathcal{T}_t(w_1)h - \mathrm{d}\mathcal{T}_t(w_2)h
$. Then, $v(0) = 0$ and $v$ is a (mild) solution to the following non-automonous equation:
\begin{equation}
\frac{\mathrm{d}v}{\mathrm{d}t} + Av + [\mathrm{d}F(\mathcal{T}_tw_1) \mathrm{d}\mathcal{T}_t (w_1) - \mathrm{d}F(\mathcal{T}_t w_2)\mathrm{d}\mathcal{T}_t (w_2)]h = 0,
\end{equation}
which can be rewritten as
\begin{equation}
\label{eq:new-equation-v}
\frac{\mathrm{d}v}{\mathrm{d}t} + Av + \mathrm{d}F(\mathcal{T}_tw_1)v + [\mathrm{d}F(\mathcal{T}_t w_1) - \mathrm{d}F( \mathcal{T}_t w_2)] \d \T_t(w_2) h = 0.
\end{equation}
Justifications for the formal computations performed below are similar to those in the proof of \cref{lem:decay-linearized}; they are omitted here. Taking the scalar product in $H$ of \eqref{eq:new-equation-v} with $v$ and using \cref{eq:as-stab-lin} along with Cauchy-Schwarz and Young inequalities leads to
\begin{equation}
\label{eq:liap-v}
 \frac{1}{2} \frac{\mathrm{d}}{\mathrm{d}t}  \|v\|_H^2 \leq - \frac{\alpha}{2} \|v\|_H^2 + \frac{1}{2\alpha} \|[\mathrm{d}F(\mathcal{T}_t w_1) - \mathrm{d}F( \mathcal{T}_t w_2)]\d \T_t(w_2)\|_{\mathcal{L}(H)}^2 \|h\|_H^2.
\end{equation}
Since $v(0) = 0$, we deduce from \eqref{eq:liap-v} multiplied by $\exp(\alpha t)$, \cref{eq:decay-linearized} and \cref{eq:exp-cont} that
\begin{equation}
\label{eq:est-unif-v}
 \|v(t)\|^2_H 
 \leq \frac{ K_R^2 \exp(- \alpha t)}{3\alpha^2} \|w_1 - w_2\|^2_H \|h\|^2_H, \quad \forall  t \geq 0.
\end{equation}
We infer from \cref{eq:est-Ttw1} and \cref{eq:est-unif-v} that
\begin{equation}
\label{eq:est-first-term}
\|\d F(\T_t w_1)[\d \T_t (w_1) - \d \T_t (w_2)]\|_{\mathcal{L}(H)} \leq m R K_R^2 \exp(-\alpha t/2) \|w_1 - w_2\|_H 
\end{equation}
for all $t\geq 0$, where $m$ is some constant independent of $R$; 
on the other hand, coming back to the second term of \cref{eq:decomposition}, we also have
\begin{equation}
\label{eq:est-second-term}
\| [\d F (\T_t w_1) - \d F(\T_t w_2)] \d \T_t (w_2)\|_{\mathcal{L}(H)} \leq K_R \exp(- 2\alpha t) \|w_1 - w_2\|_H.
\end{equation}
Thus, the desired local Lipschitz continuity is obtained by applying the triangular inequality to \cref{eq:decomposition} and integrating 
\cref{eq:est-first-term} and \cref{eq:est-second-term} over $(0, + \infty)$.
Furthermore, if we assume that both $F$ and $\d F$ are globally Lipschitz continuous, then for some $K_F$ we can choose $K_R = K_F$ independent of $R$ and replace \cref{eq:est-Ttw1} with $\|\d F(\T_t w_1) \|_{\mathcal{L}(H)} \leq K_F$, thereby proving global Lipschitz continuity of the differential $\d \M$. 

\textbf{Step 3: Conclusion.} At this point, it remains to check that our candidate $\M$ is a solution to \cref{eq:forwarding-func}.
It is clear that $\M(0) = 0$. Take $w_0$ in $\mathcal{D}(\mathcal{A})$ and consider the associated strong solution
 $w \triangleq t \mapsto  \mathcal{T}_t w_0 \in \mathcal{C}(\R_+, \mathcal{D}(A))\cap \mathcal{C}^1(\R_+, H)$ to \eqref{eq:original-w}.
Then, 
\begin{equation}
\label{eq:diff-M}
\frac{\mathcal{M}(\T_t w_0) - \mathcal{M}(w_0)}{t} = C \left [ \frac{1}{t}\int_0^t \mathcal{T}_s w_0 \, \mathrm{d}s \right ].
\end{equation}
As $w$ is continuous in $\mathcal{D}(A)$, the term
$ t^{-1} \int_0^t \mathcal{T}_sw_0 \, \mathrm{d}s$ converges to $w_0$ in $\D(A)$ when $t$ approaches $0$. Thus, $C$ being $A$-bounded, the difference quotient in \eqref{eq:diff-M} converges to $Cw_0$ in $Z$.
On the other hand, since $w \in \mathcal{C}^1(\R_+, H)$ and $\mathcal{M} \in \mathcal{C}^1(H, Z)$, by the chain rule, we have
\begin{equation}
\label{eq:chain-rule}
\frac{\mathrm{d}}{\mathrm{d}t} [ \mathcal{M}(\T_t w_0)] = -\mathrm{d}\mathcal{M}(\T_t w_0) \mathcal{A}(\T_t w_0), \quad \forall t \geq 0.
\end{equation}
Evaluating \eqref{eq:chain-rule} at $t = 0$ yields
$\mathrm{d}\mathcal{M}(w_0)\mathcal{A}(w_0) + Cw_0 = 0$ by uniqueness of the limit.

\section{Examples}
\label{sec:examples}
We provide three examples for which our previous results apply. In what follows, we use standard notation for real-valued Lebesgue and Sobolev spaces.

\subsection{Sine-Gordon equation}
\label{sec:sine-gordon}
Let $\xi$, $\gamma$, and $L$ be positive constants. Let $\mathcal{O}$ be a non-empty open subset of $\Omega \triangleq (0, L)$. Consider the following damped sine-Gordon equation with control acting on $\mathcal{O}$ and homogeneous Dirichlet boundary conditions:
\begin{subequations}
\label{eq:sine-gordon}
\begin{align}
&\partial_{tt} \theta + \xi \partial_t \theta - \partial_{xx} \theta + \gamma \sin(\theta) =  u(t) \mathds{1}_\mathcal{O}  & &\mbox{in}~ (0, L) \times (0, +\infty), \\
&\theta(0, t) = \theta(L, t) = 0 & &\mbox{for all}~ t\geq 0.
\end{align}
\end{subequations}
In this example inspired by \cite[Chapter IV]{temam_infinite-dimensional_2012}, \cref{eq:sine-gordon} may represent the voltage dynamics of the continuous limit case for coupled Josephson junctions, with the control $u$ being proportional to the applied current.
The uncontrolled dynamics generated by \cref{eq:sine-gordon} are well-posed on the energy space $H \triangleq H^1_0(\Omega) \times L^2(\Omega)$.
Using the state variable $w = [\theta, \partial_t \theta]$, we can recast \cref{eq:sine-gordon} into a semilinear evolution problem on $H$ as in \cref{sec:semilinear}. Letting $\D (A) \triangleq [H^2(\Omega) \times H^1_0(\Omega)]\cap H$ and  $U \triangleq \R$, we define the unbounded linear operator $A$, the input operator $B$, and the nonlinear mapping $F$ by
\begin{subequations}
\begin{align}
&A[\theta, \zeta] \triangleq [- \zeta, - \partial_{xx} \theta + \xi \zeta + \gamma \theta] & & \forall [\theta, \zeta] \in \D (A), \\
& Bu \triangleq [0,  u \mathds{1}_\mathcal{O}] & & \forall u \in U, \\
&F[\theta, \zeta] \triangleq [0, \gamma \sin (\theta) - \gamma \theta] & & \forall [\theta, \zeta] \in H.
\end{align}
\end{subequations}
As an output, consider the Neumann trace at, say, $x = 0$:
\begin{equation}
y(t) = Cw(t) = \partial_x \theta(0, t),
\end{equation}
which is modeled by an \emph{unbounded} (but $A$-bounded) operator. That $A^{-1}$ exists in $\mathcal{L}(H)$ can be proved using Riesz representation theorem in $H^1_0(\Omega)$. Besides, $F$ is in  $\mathcal{C}^1(H)$ and both $F$ and $\d F$ are globally Lipschitz continuous
, $\d F$ being given by
\begin{equation}
\d F(\theta, \zeta)[h_1, h_2] = [0, \gamma \cos(\theta) h_1 - \gamma h_1], \quad \forall [\theta, \zeta], [h_1, h_2] \in H.
\end{equation}
We equip $H$ with a scalar product
that is equivalent to the usual one:
\begin{equation}
([\theta_1, \zeta_1 ], [\theta_2, \zeta_2])_{H, \eps} \triangleq \int_\Omega \partial_x \theta_1 \partial_x \theta_2 \, \d x + \int_\Omega (\zeta_1 + \eps \theta_1 ) ( \zeta_2 + \eps \theta_2) \, \d x
\end{equation}
where $\eps \triangleq \min \{ \xi/4, \lambda_1 /(2 \xi) \}$, with $\lambda_1$ being the optimal Poincar\'e inequality constant. Then, after some computations similar to \cite[Section IV.1.2]{temam_infinite-dimensional_2012},  we obtain
\begin{equation}
(Ah + \d F(\theta, \eta)h, h)_{H, \eps} \geq \frac{\eps}{2} \|h\|_{H, \eps}^2 - \gamma \lambda_1 \|h\|^2_{H, \epsilon}, \quad \forall h \in \D(A), \ \forall [\theta, \zeta] \in H.
\end{equation}
Therefore, \cref{hyp:semilinear} is satisfied as long as $\gamma < \eps /(2\lambda_1)$. In that case, \cref{th:M-semilinear} provides a suitable solution $\M$ to \cref{eq:forwarding-func} with $\d \M$ globally Lipschitz continuous, upon which a forwarding control law can be built for the output regulation problem. Since the range of $CA^{-1}B$ is non-zero\footnote{
Consider for instance the image by $CA^{-1}B$ of $[0,\mathds{1}_I]$ where $I$ is some segment contained  in $\mathcal{O}$.
} and hence $\R$, \cref{th:global-stab} guarantees the existence of a locally exponentially stable equilibrium for the closed-loop system with small reference and disturbance. Furthermore, following the discussion subsequent to \cref{th:global-stab}, the global coercivity condition \cref{eq:unif-coercive} holds whenever $\eps /2(1 + \lambda_1) > \gamma$, in which case the theorem provides a globally asymptotically stable equilibrium.

\subsection{A pre-stabilized Wilson-Cowan equation} The following example is inspired by the study of neural fields (see for instance \cite{boscain_bio-inspired_2021}). Let $\Omega$ be a bounded domain in $\R^n$ and $\mathcal{O}$ be an open subset of $\Omega$. Given a positive gain $\alpha$, a kernel $k \in L^\infty(\Omega \times \Omega)$, and a smooth scalar nonlinearity $s$ that has bounded derivative and vanishes at $0$, consider the following non-local evolution equation:
\begin{equation}
\label{eq:wil-cow}
\partial_t w(x, t) + \alpha w(x, t) + \int_\Omega k(x,\nu) s(w(\nu, t)) \, \d \nu = \mathds{1}_\mathcal{O}(x) u(x, t) \quad \mbox{on}~ \Omega \times (0, +\infty).
\end{equation}
We look at the (vector-valued) output given by
\begin{equation}
y(t) = Cw(t) = w_{|\mathcal{O}}(t).
\end{equation}
Having set $H \triangleq L^2(\Omega)$, we define two  mappings $K$ and $F$ on $H$ by $[Kw](x) \triangleq s'(0) \int_\Omega k(x, \nu) w(\nu) \, \d \nu$ and [$F(w)](x) \triangleq \int_\Omega k(x, \nu) \{ s(w(\nu)) - s'(0)w(\nu) \} \, \d \nu$ for all $w \in H$. Then, $K$ is a bounded linear operator, and $F$ is continuously differentiable with $F$ and $\d F$ globally Lipschitz continuous.  We also let $A \triangleq \alpha \mathrm{id} + K$ and $Bu \triangleq \mathds{1}_\mathcal{O}u$ with $U \triangleq L^2(\mathcal{O})$. As an integral operator, $K$ is compact; thus, $0$ lies in the resolvent set of $A$ except for a bounded and countable set of values for $\alpha$.
Here, $A$ is continuous; therefore, we can choose $Z \triangleq \operatorname{Range} A^{-1}B$, which as a closed subspace of $H$ is a Hilbert space as well. By letting $C \triangleq \mathrm{id}$, the condition   \cref{eq:non-resonance}  is automatically satisfied. Furthermore, $Z = H$ if and only if $\mathcal{O} = \Omega$.
As for condition \cref{eq:as-stab-lin}, we have
\begin{equation}
(Ah + \d F(w)h, h)_H \geq (\alpha - M_{k, s}) \|h\|^2_H, \quad \forall w, h \in H,
\end{equation}
where $M_{k, s} \triangleq \iint_{\Omega \times \Omega} |k(x, \nu) s'(\nu)|^2 \, \d x\,  \d \nu$. \cref{hyp:semilinear} is satisfied whenever $\alpha > M_{k, s}$, while  global uniform coercivity \cref{eq:unif-coercive} holds provided that $\alpha > 2M_{k,s}$.

\subsection{A reaction-diffusion equation}
\label{sec:w3}
Let $\Omega \triangleq (0, L)$, $L >0$. Consider the following reaction-diffusion equation with homogeneous Dirichlet boundary conditions:
\begin{subequations}
\label{eq:w-dyn}
\begin{align}
&\partial_t w - \partial_{xx} w + w^3 = 0 &&\mbox{in}~ (0, L) \times (0, +\infty),\\
&w(0, t) = w(L, t) = 0 &&\mbox{for}~ t \in (0, +\infty).
\end{align}
\end{subequations}
To \cref{eq:w-dyn} we can associate
a contraction semigroup $\{\T_t\}$ on $H \triangleq L^2(\Omega)$ with generator $- \A$ satisfying \cref{hyp:nonlin-coer}. Note that unlike the previous examples, this problem does \emph{not} enter the realm of application of \cref{sec:semilinear} as the (pointwise) function $w \mapsto w^3$ cannot be modelled by a nonlinear map $F:H\to H$. Nevertheless, it can be proved that $\{\T_t\}$ is Fr\'echet differentiable \cite[Section VI.2.1]{temam_infinite-dimensional_2012}, the first variation equation around a trajectory $w$ of \cref{eq:w-dyn} being given by
\begin{subequations}
\label{eq:first-var-cube}
\begin{align}
&\partial_t v - \partial_{xx}v + 3w^2 v = 0 &&\mbox{in}~ (0, L) \times (0, +\infty),\\
&v(0, t) = v(L, t) = 0 &&\mbox{for}~ t \in (0, +\infty).
\end{align}
\end{subequations}
We start by proving that
$
w \mapsto \int_0^{+\infty} \d \T_t (w) \, \d t
$
is a locally Lipschitz continuous map of $H$ into $\L(H)$. Following the arguments given at the very end of the proof of \cref{th:M-semilinear}, this will have the consequence that for each \emph{bounded} output operator $C$, $\M$ defined by $\M(w) \triangleq -C \int_0^{+\infty} \T_t w \, \d t$ is a solution to \cref{eq:forwarding-func} with locally Lipchitz continuous differential.

 Let $w_1^0$ and $w_2^0$ be taken in $H$, with $\|w_i^0\|_H \leq R$. In what follows, approximation arguments are omitted. Denote by $w_i$ the corresponding solution $t\mapsto \T_t w_i^0$ to \cref{eq:w-dyn}. Let $h \in H$ and
$
v_i(t) \triangleq \d \T_t (w_i)h
$, $t \geq 0$.
Then, by \cref{eq:first-var-cube}, $v \triangleq v_1 - v_2$ solves
\begin{subequations}
\begin{align}
\label{eq:var-var}
&\partial_t  v - \partial_{xx} v + 3w_1^2 v_1 - 3w_2^2 v_2 = 0 &&\mbox{in}~ (0, L) \times (0, +\infty),\\
&v(0, t) = v(L, t) = 0 &&\mbox{for}~ t \in (0, +\infty),
\end{align}
\end{subequations}
with  $v(\cdot, 0) = 0$. Letting $w \triangleq w_1 - w_2$, we infer from \cref{eq:var-var} that
\begin{equation}
\label{eq:var-var-bis}
\partial_t v - \partial_{xx} v + 3w_1^2 v + 3 w\{w_1 + w_2\} v_2 = 0 \quad \mbox{in}~ (0, ) \times (0, + \infty).
\end{equation}
Let $\tau > 0$ and $Q_\tau \triangleq \Omega \times (0, \tau)$. Multiplying \cref{eq:var-var-bis} by $v$ and integrating over $Q_\tau$ yields
\begin{equation}
\label{eq:ipp-cube}
\frac{1}{2} \int_\Omega |v(\cdot, \tau)|^2 \, \d x + \iint_{Q_\tau}  | \partial_x v  |^2 + 3w_1^2 |v|^2  + 3 \{w_1 + w_2\} v_2 w v \, \d x \, \d t = 0,
\end{equation}
For a.e. $(x, t)$ in $Q_\tau$, we have
\begin{multline}
\label{eq:est-pointwise}
| \{w_1(x, t) + w_2(x,t)\} v_2(x, t) w(x, t) v(x, t)|  \\\leq K \left \{ \|w_1(t)\|_{H^1_0(\Omega)} + \|w_2( t)\|_{H^1_0(\Omega)} \right \} \|v( t)\|_{H^1_0(\Omega)} |w(x, t) v_2 (x, t)|,
\end{multline}
where $K > 0$ comes from the continuous embedding $H^1_0(\Omega) \hookrightarrow L^\infty(\Omega)$ and does not depend on $\tau$. Integrating  \cref{eq:est-pointwise} over $\Omega$, we obtain that for a.e. $t \in (0, \tau)$,
\begin{multline}
\label{eq:est-cs-cube}
\int_\Omega |\{w_1(x, t) + w_2(x, t)\} v_2(x, t) w(x, t) v(x, t)| \, \d x    \\ \leq K  \left \{ \|w_1(t)\|_{H^1_0(\Omega)} + \|w_2( t)\|_{H^1_0(\Omega)}  \right \} \|v(t) \|_{H^1_0(\Omega)} \|w(t)\|_{H}  \|v_2(t)\|_{H}.
\end{multline}
On the other hand, by the contraction properties of \cref{eq:w-dyn,eq:first-var-cube}, 
\begin{equation}
\label{eq:cont-w-var}
\|w(t)\|_{H} \leq  \|w_1^0 - w_2^0\|_{H}, \quad \|v_2(t)\|_{H} \leq \|h\|_{H}, \quad \forall t \geq 0.
\end{equation}
Therefore, plugging \cref{eq:est-cs-cube,eq:cont-w-var} into \cref{eq:ipp-cube} leads to
\begin{multline}
\label{eq:est-cs-cube-bis}
\frac{1}{2} \int_\Omega |v(\cdot, \tau)|^2 \, \d x + \iint_{Q_\tau}  | \partial_x v |^2 \, \d x \, \d t\\
\leq 3K   \|w_1^0 - w_2^0\|_{H} \|h\|_{H} \int_0^\tau  \left \{ \|w_1( t)\|_{H^1_0(\Omega)} + \|w_2(t)\|_{H^1_0(\Omega)} \right \} \|v(t)\|_{H^1_0(\Omega)} \, \d t.
\end{multline}
At this point we take advantage of additional regularity properties of \cref{eq:w-dyn}, namely
\begin{equation}
\|w_i\|_{L^2(0, +\infty; H^1_0(\Omega))}^2 = \int_0^{+ \infty} \int_\Omega |\partial_x w_i |^2 \d x \, \d t \leq \frac{1}{2} \|w_i^0\|^2_H,
\end{equation}
which follows from integrating the energy identity
\begin{equation}
\frac{1}{2} \dt{} \left \{ \int_\Omega |w_i|^2 \, \d x \right \} + \int_\Omega |\partial_x w_i|^2 \, \d x + \int_\Omega |w_i|^4 \, \d x = 0.
\end{equation}
We can then use Cauchy-Schwarz and Young inequalities in  \cref{eq:est-cs-cube-bis} to obtain
\begin{multline}
\frac{1}{2} \int_\Omega |v(\cdot, \tau)|^2 \, \d x + \iint_{Q_\tau}  | \partial_x v  |^2 \,  \d x \, \d t \\
\leq  9K^2   \|w_1^0 - w_2^0\|_{H}^2 \|h\|_{H}^2 \{\|w_1^0\|^2_{H} + \|w_2^0\|^2_{H}\} + \frac{1}{2}\iint_{Q_\tau}  | \partial_x v |^2 \,  \d x \, \d t.
\end{multline}
Recalling that $\|w_i^0\|_{L^2(\Omega)} \leq R$, we get
\begin{equation}
\frac{1}{2} \int_\Omega |v(\cdot, \tau)|^2 \, \d x  + \frac{1}{2}  \iint_{Q_\tau}  | \partial_x v |^2  \d x \, \d t \leq 18 K^2 R^2   \|w_1^0 - w_2^0\|_{H}^2 \|h\|_{H}^2.
\end{equation}
Using the Poincaré inequality $\|v\|^2_{H^1_0(\Omega)} \geq \lambda_1 \|v\|^2_H$, we obtain
\begin{equation}
\frac{1}{2} \int_\Omega |v(\cdot, \tau)|^2 \, \d x  + \frac{\lambda_1}{2} \iint_{Q_\tau} |v|^2 \, \d x \, \d t \leq {18 K^2 R^2}\|w_1^0 - w_2^0\|_{H}^2 \|h\|_{H}^2.
\end{equation}
This holds for all $\tau > 0$. Hence, an appropriate Gr\"onwall inequality yields
\begin{equation}
\label{eq:uniform-estimate}
\|v(t)\|^2_{H} \leq K' R^2 \exp(- \lambda_1 t)  \|w_1^0 - w_2^0\|_{H}^2 \|h\|_{H}^2, \quad \forall t \geq 0.
\end{equation}
\Cref{eq:uniform-estimate} is valid for all $h \in H$ and any $w_i^0$ in the ball of (arbitrary) radius $R$ in $H$. Thus, similarly as in the proof of \cref{th:M-semilinear}, our first claim that $
w \mapsto \int_0^{+\infty} \d \T_t (w) \, \d t
$
is a locally Lipschitz continuous map of $H$ into $\L(H)$ follows.

Let $B : U \to H$ and  $C : H \to Z$ be a bounded input and output operators. With our choice of $\M$, the ``non-resonance condition'' reads as $\operatorname{Range} CA^{-1}B= Z$, where $A$ is $- \partial_{xx}$ with zero Dirichlet boundary conditions. As in \cref{sec:sine-gordon}, we choose a non-empty open subset $\mathcal{O}$ of $\Omega$ and let $Bu = u \mathds{1}_\mathcal{O}$, $u \in \R$. By virtue of \cref{th:global-stab} (in the case that $\d \M$ is only locally Lipschitz continuous, which requires $d = 0$), this localized scalar control achieves local regulation of the nonlinear reaction-diffusion equation \cref{eq:w-dyn} with one-dimensional bounded output maps such as the mean-value $Cw = \int_{\Omega} w$.

\begin{rem}
    This example shows that the arguments used in \cref{sec:proof-semilinear} can be adapted beyond the semilinear context to derive existence and suitable properties of $\M$ and $\d\M$ using specific features of the system, such as additional regularity and dissipativity in higher-order norms.
\end{rem}

\section{Conclusions} We finish our article with some comments on our results.
We wish to point out two notable byproducts of our approach in terms of \emph{robustness}.

In order to take into account nonlinear behavior of actuators (e.g., saturation) in the feedback loop, one may investigate stability properties of \cref{eq:control-system}-\cref{eq:feedback-law}
when a nonlinearity $g$ is applied to the input $u$.
This is also relevant in applications where the control signal must satisfy some prescribed bound in norm (see e.g. \cite{marx:hal-02944073}).
Consider a Lipschitz continuous map $g$ that vanishes at $0$ and is
\emph{strongly} monotone in some neighborhood of $0$. Then, the local (strict) contraction property  of the $[w, \eta]$-dynamics is preserved. This leaves room for a possible adaptation of \cref{th:global-stab} in the case of saturated or \textit{a priori} bounded control.

We also believe that our framework provides tools to analyse the behavior of the closed-loop \cref{eq:control-system}-\cref{eq:feedback-law} under  certain  {time-varying} disturbances. Indeed, given $[d_0, \yr]$ as in \cref{th:global-stab}, to which we associate an equilibrium $[w^\star, z^\star]$, consider a disturbance of the form $d(t) = d_0 + d_1(t)$ with $d_1$ small in $L^2(0, +\infty; H)$. It can then be deduced from \cref{eq:local-cont} that the system \cref{eq:control-system}-\cref{eq:feedback-law} is \emph{incrementally} input-to-state stable in a neighborhood of $[w^\star, z^\star]$, allowing us to quantify the deviation from equilibrium due to the exogenous signal $d_1$ in terms of its $L^2$-norm.

Finally, putting aside the problem of output regulation and following \cite{marx:hal-02944073, marx:hal-03417238}, we might be interested in stabilizing the cascade composed of the (actuated) $w$-subsystem and a more general $z$-subsystem governed by $\d z/ \d t = Sz + Cw$, where $S$ is a 
skew-adjoint
operator on $Z$.
This would require to investigate a nonlinear Sylvester equation of the form
\begin{equation}
\label{eq:nonlin-sylvester}
\d\M(w)\A(w) + S \M(w) + Cw = 0, \quad \forall w\in \D (\A).
\end{equation}
Under the condition that a sufficiently regular solution $\M$ to \cref{eq:nonlin-sylvester} exists, the Lyapunov analysis performed in \cref{sec:forwarding} remains valid, which is a good starting point for analysing stability of the new closed-loop with control law given by \cref{eq:feedback-law}.

\section*{Acknowledgments}
The authors would like to thank Vincent Andrieu, Daniele Astolfi and Christophe Prieur for many fruitful discussions.

\bibliographystyle{alpha}
\bibliography{forwarding-final.bib}

\end{document}